\theoremstyle{plain}
\newtheorem{theorem}{Theorem}[section]
\newtheorem{proposition}[theorem]{Proposition}
\newtheorem{lemma}[theorem]{Lemma}
\theoremstyle{definition}
\newtheorem{definition}[theorem]{Definition}
\theoremstyle{remark}
\newtheorem{remark}[theorem]{Remark}
\numberwithin{equation}{section}
\numberwithin{theorem}{section}
\renewcommand{\epsilon}{\varepsilon}
\renewcommand{\tilde}{\widetilde}
\renewcommand{\hat}{\widehat}
\definecolor{light}{gray}{.9}
\title[Exclusion with vorticity]{Hydrodynamic limit of an exclusion process with vorticity}
\author[L.\ De Carlo]{Leonardo De Carlo}
\address{Leonardo De Carlo \hfill\break \indent
	Center for Mathematical Analysis,  Geometry and Dynamical Systems, Instituto Superior T\'ecnico, Universidade de Lisboa, Av. Rovisco Pais, 1049-001 Lisboa, Portugal.}
\email{neoleodeo@gmail.com}
\author[D.\ Gabrielli]{Davide Gabrielli}
\address{Davide Gabrielli \hfill\break \indent
	DISIM, Universit\`a dell'Aquila
	\hfill\break\indent
	Via Vetoio,   67100 Coppito, L'Aquila, Italy}
\email{davide.gabrielli@univaq.it}
\author[P.\ Gon\c{c}alves]{Patr\'icia Gon\c{c}alves}
\address{Patr\'icia Gon\c calves\hfill\break \indent Center for Mathematical Analysis,  Geometry and Dynamical Systems, Instituto Superior T\'ecnico, Universidade de Lisboa, Av. Rovisco Pais, 1049-001 Lisboa, Portugal.}
\curraddr{}
\email{pgoncalves@tecnico.ulisboa.pt}
\thanks{}
\begin{document}

\begin{abstract}
		We construct a non reversible exclusion process with Bernoulli product invariant measure and having, in the diffusive hydrodynamic scaling, a non symmetric diffusion matrix, that can be explicitly computed. The antisymmetric part does not affect the evolution of the density but it is relevant for the evolution of the current. Switching on a weak external field we obtain a symmetric mobility matrix that is related just to the symmetric part of the diffusion matrix by an Einstein relation. We argue that this fact is typical within a class of generalized gradient models. We consider for simplicity the model in dimension $d=2$, but a similar behavior can be also obtained in higher dimensions.
		
		\bigskip
		
		\noindent {\em Keywords}: Lattice gases, hydrodynamic scaling limits, exclusion process, discrete Hodge decomposition.
		
		\smallskip
		
		\noindent{\em AMS 2010 Subject Classification}:
		82C22, 82C70   
\end{abstract}

	\maketitle
	\thispagestyle{empty}

\section{Introduction}

A major advance of the last decades in probability is the derivation of the collective behavior of models of stochastic lattice gases \cite{KL,Spohn}. In the case of diffusive models, the continuum hydrodynamic limit is given by a nonlinear diffusion equation of the form
\begin{equation}\label{idr}
\partial_t\rho=\nabla\cdot\left(\mathcal D(\rho)\nabla \rho\right)\,,
\end{equation}
where the positive definite matrix $\mathcal D$ is the diffusion matrix.
When the stochastic models satisfy a suitable constraint, called \emph{gradient condition} (see for example \cite{KL,Spohn,FHU, GK, N, S, Wick}), it is possible to derive an explicit form for the diffusion matrix. In the general reversible case $\mathcal D$ is obtained by the Green-Kubo formulas or variational representations \cite{KLS,KL,Spohn,VY}.

If we denote by $D$ the symmetric part of the diffusion matrix $\mathcal D$ we have that equation \eqref{idr}
is equivalent to the equation
\begin{equation}\label{idr-eqiv}
\partial_t\rho=\nabla\cdot\left(D(\rho)\nabla \rho\right)
\end{equation}
and $D$ is of course symmetric and positive definite.  The symmetry of $\mathcal D$ is therefore not relevant as far as the hydrodynamic equation for the evolution of the density is considered. All the models of particle systems for which a diffusive hydrodynamic limit has been proved are such that $\mathcal D$ is symmetric and therefore $\mathcal D=D$.

Equation \eqref{idr} can be rewritten as a conservation law $$\partial_t\rho+\nabla\cdot J(\rho)=0,$$
where $J(\rho)$ is the typical current associated to the density profile $\rho$. For reversible and gradient models
 \cite{MFT,BDGJL1,BDGJL2} we have that the diffusion matrix $\mathcal D$ is symmetric and the typical current is given by
\begin{equation}\label{tygr}
J(\rho)=-\mathcal D(\rho)\nabla \rho\,,
\end{equation}
which  is indeed the classical form of the Fick's law. We construct a class of generalized gradient models for wich the Fick's law \eqref{tygr} holds with a non symmetric diffusion matrix $\mathcal D$ that can be explicitly computed. We consider, for simplicity, the two dimensional setting,  but a similar result can be obtained in any dimension. This is the first example for which a non symmetric diffusion matrix is rigorously derived starting form particle systems.

For our models \eqref{tygr} can be written as
\begin{equation}\label{tygengr}
J(\rho)=-D(\rho)\nabla \rho-A(\rho)\nabla \rho,
\end{equation}
where $D$ and $A$ are respectively the symmetric and antisymmetric part of the diffusion matrix $\mathcal D$. Note that the second term of the right-hand side in \eqref{tygengr} does not
contribute to the hydrodynamic equation for the density, since it is always a divergence free term.

Introducing the orthogonal gradient defined by $\nabla^\perp F=(-\partial_y F, \partial_x F)$, the divergence free term in \eqref{tygengr} can be written as $-\nabla^\perp a(\rho)$ for a suitable function of the local density  $a(\rho)$. In this article, we discuss  examples where the matrices $D$ and $A$ can be explicitly computed.

We  point out that we have non reversible stochastic lattice gases  with an explicit product invariant measure and having a diffusive behavior. Our class of models satisfy in addition a generalized gradient condition. In general, and specially in dimension higher than one, it is difficult to construct models satisfying gradient conditions and for which it is known the invariant measure (see Section 2.4 in part II of \cite{Spohn}). The construction of the models is therefore an important part of the paper. Some generalizations are possible, but even small modifications of the rates would lead to the loss of one of the two properties.

We point out in addition that, since our models are not reversible, it is not possible to compute the diffusion matrix using the Green-Kubo formulas whose derivation uses reversibility as a basic ingredient. Applying directly the Green-Kubo formulas (see for example \cite{Spohn} section 2.2 of part II) you would get a diagonal diffusion matrix that is the wrong result.

For our special class of models in dimension 2 we have
$$
J(\rho)=-\nabla \rho-\nabla^\perp a(\rho),
$$
where $a(\rho)=2\alpha\left[\rho(1-\rho)\right]^2$ with $\alpha$ a
real parameter such that $|\alpha|<1$.

\smallskip

Switching on a smooth weak external field $E$ we obtain that the typical current becomes
\begin{equation}
J^E(\rho)=J(\rho)+\sigma(\rho)E\,,
\end{equation}
where $\sigma$ is a  positive definite matrix that is called mobility or conductivity (see \cite{Spohn} section 2.5 of part II). For gradient and reversible models we have that the mobility matrix $\sigma$ and the diffusion matrix $\mathcal D$ are related by a proportionality relation that is the Einstein relation (see again \cite{Spohn} formula 2.72 of part II). For our models we have instead that $\sigma$ is symmetric and is proportional to $D$ (i.e. just to the symmetric part of the diffusion matrix) by the Einstein relation
\begin{equation}\label{Einr}
D(\rho)=\sigma(\rho)f''(\rho)\,,
\end{equation}
where $f$ is the density of the free energy. For our solvable models we have $$f(\rho)=\rho\log (\rho)+(1-\rho)\log(1-\rho)$$ and $D(\rho)=\mathbb I$, $\sigma(\rho)=\rho(1-\rho)\mathbb I$, where $\mathbb I$ is the identity matrix.

\smallskip

The paper is organized as follows.

In Section \ref{sec:model} we introduce the stochastic lattice gases for which we prove the scaling limit. We underline the basic special features: it is a non reversible model with invariant Bernoulli product measure; satisfies a generalized gradient condition with an exact orthogonal splitting in terms of explicit local functions.

In Section \ref{SL} we introduce the empirical measure and the integrated empirical current, discuss the topological setting of the latter and state the main theorems of the paper concerning the scaling limit of both the empirical density and current.

In Section \ref{PC} we prove the convergence of the integrated empirical current.

In Section \ref{norigor} we discuss, informally, the hydrodynamic behavior of a class of generalized gradient models, the hydrodynamics in presence of a weak external field and the Einstein relation that relates just the symmetric part of the diffusion matrix and the mobility. All this is discussed without the mathematical details in order to give a general overview on the behavior of the class of models that we are considering.

\smallskip

We collect here, for a convenient consultation, the basic notation.

\subsection{Notation}\label{BN}

Consider the rescaled  two dimensional discrete torus $\Lambda_N:=\left(\mathbb Z^2/N\mathbb Z^2\right)\frac{1}{N}$, having mesh $1/N$. We denote by $E_N$  the directed edges corresponding to ordered pairs $(x,y)$ of nearest neighbor vertices of $\Lambda_N$. We call $\mathcal E_N$ the corresponding undirected ones. A generic element of $\mathcal E_N$ is written as $\{x,y\}$ when $(x,y)\in E_N$.  Note that if $(x,y)\in E_N$ then also $(y,x)\in E_N$. If $e=(x,y)\in E_N$ we denote
$e^-:=x$ and $e^+:=y$. Moreover, we call $\mathfrak e:=\{x,y\}$ the corresponding unoriented edge.
We denote by $e^{(1)}:=(1/N,0)\in \mathbb R^2$ and $e^{(2)}:=(0,1/N)\in \mathbb R^2$ the vectors of size $1/N$ and parallel to the coordinate axis.

A cycle is a sequence $\left(x^1,x^2,\dots x^n, x^{n+1}\right)$ of distinct elements of $\Lambda_N$  such that $\{x^i,x^{i+1}\}\in \mathcal E_N$ and $x^{n+1}=x^1$. We identify cycles that can be obtained one from the other by cyclic permutations (i.e. different starting points).

The lattice is embedded into $\Lambda:=\mathbb R^2/\mathbb Z^2$, the continuous two-dimensional torus of side length 1. This embedding  determines a cellular subdivision of $\Lambda$ into squares of side length $1/N$ called \emph{faces}.
An oriented face is an elementary cycle in the graph for example of the type $(x,x+e^{(1)}, x+e^{(1)}+e^{(2)},x+e^{(2)},x)$. In this case we have an anticlockwise oriented face. This corresponds geometrically to an elementary squared face having vertices $x,x+e^{(1)},x+e^{(1)}+e^{(2)},x+e^{(2)}$ plus an orientation in the anticlockwise sense. The same elementary face can be oriented clockwise and this corresponds to the elementary cycle $(x,x+e^{(2)}, x+e^{(1)}+e^{(2)},x+e^{(1)},x)$. If $f$ is a given oriented face we denote by $-f$ the oriented face corresponding to the same geometric square but having opposite orientation.
We call $F_N$ the collection of oriented faces,  $ F^\circlearrowleft_N $ the collection of the anticlockwise oriented ones and $ F^\circlearrowright_N $ the collection of the clockwise oriented ones. We call $\mathcal F_N$ the collection of unoriented faces. An unoriented face is simply determined by a collection $\{x,x+e^{(1)},x+e^{(1)}+e^{(2)},x+e^{(2)}\}$
of vertices of an elementary face. Note that both $f$ and $-f$ correspond to the same unoriented face that we call $\mathfrak f$.
Given $f\in F_N$ and $e\in E_N$ we write $e\in f$ if going around
the face $f$ according to its orientation we go through $e$ according to its orientation. Given $e\in E_N$ there are only two elements of $F_N$ to which it belongs, one is clockwise oriented while the other one is anticlockwise oriented. We call  $f^+(e)$ the anticlockwise face such that $ e\in f^+(e) $ and $ f^-(e) $ the anticlockwise face such that $ e\in -f^-(e)$. We denote by $\mathfrak f^\pm(e)$ the corresponding un-oriented faces. Given an un-oriented face $\mathfrak f$ we denote
by $f^\circlearrowleft$ and $f^\circlearrowright$, respectively, the corresponding
anticlockwise and clockwise oriented faces.

The group of translations acts naturally on the discrete torus. We denote by $\tau_x$ the translation by the element $x\in \Lambda_N$. The translations act on configurations $\eta\in \{0,1\}^{\Lambda_N}$ by $\left[\tau_x\eta\right](z):=\eta(z-x)$ and on functions $g$ by $\left[\tau_x g\right](\eta):=g(\tau_{-x}\eta)$. For notational convenience it is useful to define $\tau_{\mathfrak f}$  for an un-oriented face $\mathfrak f$. If the vertices belonging to $\mathfrak f$ are $\{x,x+e^{(1)},x+e^{(2)},x+e^{(1)}+e^{(2)}\}$ then we define $\tau_{\mathfrak f}:=\tau_x$.

Given $\Gamma\subseteq \Lambda_N$ and a configuration $\eta$, we call $\eta_{\Gamma}$ the restriction of the configuration $\eta$ to $\Gamma$. Given two configuration of particles $\eta, \eta'$ we call $\eta_{\Gamma}\eta'_{\Gamma^c}$
the configuration of particles coinciding with $\eta$ on $\Gamma$ and with $\eta'$
on $\Gamma^c$.

\section{The models}\label{sec:model}

We consider particles satisfying an exclusion rule so that the configuration space is $\Sigma_N:=\{0,1\}^{\Lambda_N}$. A generic configuration is denoted by $\eta$ and $\eta(x)$ is the occupation number of the site $x\in \Lambda_N$, that is the number of particles present at  site $x$. The number of particles at site $x$ at time $t$ is denoted by $\eta_t(x)$, which again by the exclusion rule  can be either $0$ or $1$. Given a configuration $\eta$ and $\{x,y\}\in \mathcal E_N$ we denote by $\eta^{x,y}$ the configuration obtained exchanging the occupation numbers at $x$ and $y$ while keeping fixed the configuration at the other sites, i.e.
$$
\eta^{x,y}(z):=\left\{
\begin{array}{ll}
\eta(y) & \textrm{if}\ z=x\,,\\
\eta(x) & \textrm{if}\ z=y\,,\\
\eta(z) & \textrm{otherwise}\,.
\end{array}
\right.
$$

The generator of the dynamics is  given on $f:\{0,1\}^{\Lambda_N}\rightarrow \mathbb {R}$ by
\begin{equation}\label{generatore}
\mathcal L_N f(\eta)=\sum_{(x,y)\in  E_N}c_{x,y}(\eta)\left[f(\eta^{x,y})-f(\eta)\right]\,.
\end{equation}
We fix $c_{x,y}(\eta)$ in such a way that it is zero unless
$\eta(x)=1$ and $\eta(y)=0$. In this way $c_{x,y}(\eta)$
represents the rate at which one particle jumps from $x$ to $y$
in the configuration $\eta$.
We will mainly concentrate on a specific choice for the rates $c_{x,y}(\eta)$.  In the general discussion we will however always consider generically translational covariant lattice gases, i.e. models for which the rates satisfy
\begin{equation}\label{cova}
c_{x,y}(\eta)=c_{x+z,y+z}(\tau_z\eta)\,.
\end{equation}
This corresponds to say that particles jump according to the same stochastic mechanism on any point of the lattice.

\subsection{The jump rates}\label{sec:rates}
The main model that we will consider is determined by the following choice for the rates
\begin{equation}\label{ratej}
c_{x,y}(\eta):=\eta(x)\big(1-\eta(y)\big)+\eta(x)\left[\tau_{\mathfrak f^+(x,y)}g(\eta)-\tau_{\mathfrak f^-(x,y)}g(\eta)\right]\,,
\end{equation}
where the function $g$ is a local function depending just on the occupation numbers at the vertices of the face $\{0,e^{(1)}, e^{(1)}+e^{(2)},e^{(2)}\}$. More precisely, we have
\begin{equation}\label{defg}
g(\eta)=\left\{
\begin{array}{ll}
\alpha & \textrm{if}\ \eta(0)=\eta(e^{(1)}+e^{(2)})=1\ \textrm{and}\ \eta(e^{(1)})=\eta(e^{(2)})=0\,,\\
\alpha & \textrm{if}\ \eta(e^{(1)})=\eta(e^{(2)})=1\ \textrm{and}\ \eta(0)=\eta(e^{(1)}+e^{(2)})=0\,,\\
0 & \textrm{otherwise}\,,
\end{array}
\right.
\end{equation}
where $\alpha$ is a real parameter such that $|\alpha|<1$.
This is a special case of the class of models introduced in \cite{DG} and  briefly discussed in a qualitative way  in \cite{LDC}.
The informal and intuitive description of the dynamics associated to the rates \eqref{ratej} is the following. Particles perform a  simple exclusion process, but the faces containing exactly 2 particles located at sites which are not nearest neighbors  let the particles rotate anticlockwise when $\alpha>0$
and clockwise when $\alpha<0$ with a rate equal to $|\alpha|$.

\begin{figure}[H]
	
	\centering
	
	\includegraphics{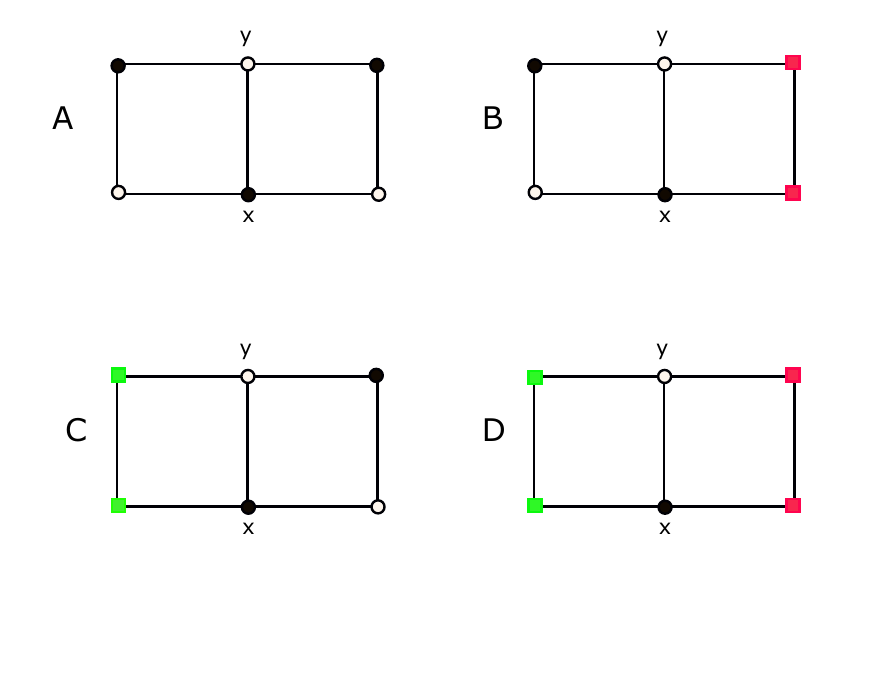}
	
	\caption{The possible configurations of particles on the two faces sharing the vertical edge $\{x,y\}$ when the lower
	vertex $x$ is occupied and the upper vertex $y$ is empty. Particles are denoted by $\bullet$, empty sites by $\circ$. Pairs of colored vertical square denote a configuration different from the one of the corresponding vertical edge in the case A. The two different colors are used to stress the fact that configurations associated to different colors may be different (case D).}
\label{ratesF}	
\end{figure}
In Figure \ref{ratesF} we illustrate the possible configurations on the two faces $\mathfrak f^\pm(x,y)$ when $y=x+e^{(2)}$ and
$\eta(x)=1-\eta(y)=1$. Particles are drawn as $\bullet$ and empty sites are drawn as $\circ$. The rate at which the particle at $x$ jumps to $y$ is given by $1$ in the configurations of type A and D, while it
is given by $1+\alpha$ in the configurations of type B, and finally it is $1-\alpha$ in the configurations of type C. In the case A we draw exactly
the structure of the configuration. In the case B, with the two red squares we indicate any configuration of particles on that vertical bond different from the one of the corresponding bond in the case A. This means that there are 3 different possible configurations of particles corresponding to the case
B. The same happens for the case C and the two green squares. In the case D, the red squares and the green squares have to satisfy the same constraints
of the previous cases and then in the case D we can have 9 different configurations of particles. Since the model is invariant by rotation, the rates for jumps on different directions (horizontal or downward) are obtained just rotating Figure
\ref{ratesF}.

We give an alternative description of the rates in Figures \ref{wei} and \ref{At}.
This is because the form of the rates is important to understand the origin of the divergence free part of the current. We fix a configuration $\eta$ and show how to determine the rates of jump across each edge $(x,y)\in E_N$. This will be zero unless $\eta(x)=1$ and $\eta(y)=0$. We show this associating some weights to the oriented edges.

In Figure \ref{wei} we show how to assign the weights. We have to search on the lattice for local configurations like the ones drawn. Black arrows correspond to weight $1$ while blue arrows correspond to weight $\alpha$ (recall that $\alpha$ can be also negative). Note that blue arrows are associated to edges along a face only in the case that the vertices of the face contain exactly $2$ particles that are at opposite corners. We call such a face \emph{activated}. We represent activated faces in Figure \ref{At} with a  \textcolor{blue}{A} drawn in the middle.
\begin{figure}[]
	\centering
	\includegraphics{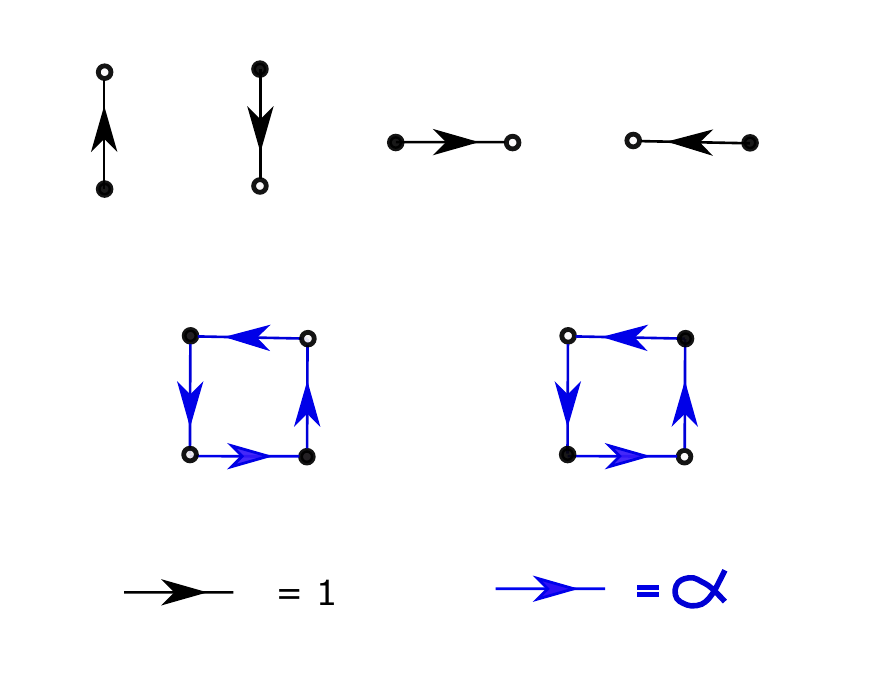}
	\caption{The graphical rules to determine the rates associated to a configuration of particles. Weights are associated to local configuration of particles as in the figure. Black arrows represent an unitary weight while blue arrows represent a weight $\alpha$. Weights concordantly oriented sum while weights oppositely oriented are subtracted.}
	\label{wei}	
\end{figure}

\begin{figure}[]	
	\centering
	\includegraphics{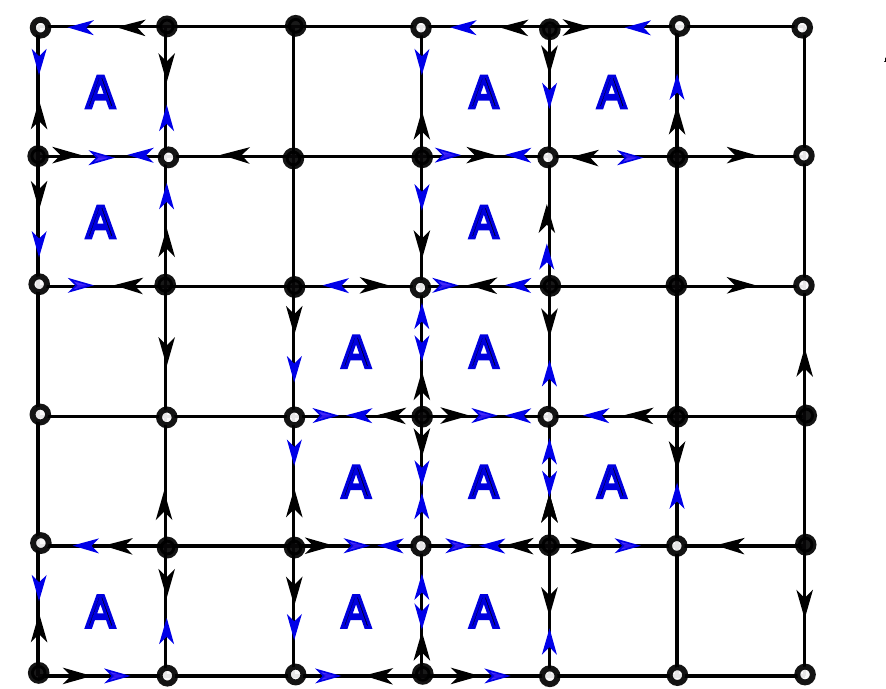}
	
\caption{An example of a configuration with the weights associated according to the local rules of Figure \ref{wei}. The \textcolor{blue}{A} inside a face means that the face is activated and weights $\alpha$ are associated to its edges counterclockwise.}
	\label{At}	
\end{figure}

We consider now the lattice with a fixed configuration of particles like in Figure \ref{At}. We search for all local configurations like in Figure \ref{wei} and assign the corresponding weights. The final weight associated to an edge is obtained summing all the weights that have been given in the procedure. Weights having the same orientation sum, while those having opposite orientations subtract.

By construction, an edge that has a non zero weight must contain a black arrow. For such an edge, we sum to the value $1$, corresponding to the black arrow, the value $\alpha$ if there is a blue arrow concordantly oriented, and $-\alpha$ if there is a blue arrow oppositely oriented. On each edge, it is possible to have just one blue arrow or two oppositely oriented. This means that on each edge with nonnegative weight there is a preferred orientation, let us say $(x,y)$, that is the one determined by the black arrow. The final weights of the graph (that correspond to the transition rates)
are obtained by giving to $(x,y)$ the weight obtained by the algebraic rules stated above (and that give always a positive result) and giving instead weight $0$ to $(y,x)$.

\subsection{Invariant measures}

We denote by $\nu_\rho$ the Bernoulli product measures on $\Sigma_N$ of parameter $\rho\in[0,1]$. We have the following.

\begin{lemma}
The Bernoulli product measures $\nu_\rho$ are invariant but not reversible (unless $\alpha=0$) for the class of models described in Section \ref{sec:rates}.
\end{lemma}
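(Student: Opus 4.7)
The plan is to decompose the generator as $\mathcal L_N = \mathcal L_N^S + \mathcal L_N^R$, with $c^S_{x,y}(\eta)=\eta(x)(1-\eta(y))$ the simple symmetric exclusion part and $c^R_{x,y}(\eta)=\eta(x)[\tau_{\mathfrak f^+(x,y)}g(\eta)-\tau_{\mathfrak f^-(x,y)}g(\eta)]$ the vortical perturbation. Since the dynamics conserves the particle number and $\nu_\rho$ restricted to each level set $\{\eta:\sum_x\eta(x)=k\}$ is uniform, invariance of $\nu_\rho$ is equivalent to invariance of the uniform measure on each level set, which in turn reduces to the pointwise identity
\begin{equation*}
\sum_{(x,y)\in E_N} c_{x,y}(\eta^{x,y})\;=\;\sum_{(x,y)\in E_N} c_{x,y}(\eta),\qquad \forall\,\eta\in\Sigma_N,
\end{equation*}
expressing that incoming and outgoing rates balance at every configuration.

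The SSEP contribution is handled at once by the involution $(x,y)\mapsto(y,x)$ on $E_N$: $c^S_{x,y}(\eta^{x,y})=\eta(y)(1-\eta(x))=c^S_{y,x}(\eta)$, so the two sums agree. For the vortical part I would first reorganize $\sum_{(x,y)} c^R_{x,y}(\eta)$ face by face: the CCW-edge contribution equals $\sum_{\mathfrak f}\tau_{\mathfrak f}g(\eta)\sum_{v\in\mathfrak f}\eta(v)$ and the CW-edge contribution is identical, because each activated face carries exactly two particles; both pieces equal $2\alpha$ times the number of activated faces in $\eta$ and therefore cancel, giving $\sum_{(x,y)} c^R_{x,y}(\eta)=0$. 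The companion identity $\sum_{(x,y)} c^R_{x,y}(\eta^{x,y})=0$ is the technical core and I would again work face by face: for a fixed face $\mathfrak f$, $\tau_{\mathfrak f}g(\eta^{x,y})\ne 0$ only when the swap along an edge of $\mathfrak f$ activates $\mathfrak f$, which forces $\eta$ restricted to $\mathfrak f$ to be in one of the four ``pre-activated'' patterns, namely two adjacent particles and two adjacent holes. A direct enumeration over these four patterns shows that the CCW- and CW-edge contributions to the inner sum each equal $\alpha$ and hence cancel on every face. I expect this small combinatorial enumeration to be the main technical step of the proof.

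Non-reversibility when $\alpha\ne 0$ then follows from a single counterexample to detailed balance. Take $\eta$ with exactly one activated face $\mathfrak f$, e.g.\ two isolated particles at diagonally opposite corners of $\mathfrak f$ with no neighboring face activated, and pick a CCW-oriented edge $(x,y)$ of $\mathfrak f$ with $\eta(x)=1$ and $\eta(y)=0$. Then $c_{x,y}(\eta)=1+\alpha$, while after the swap the two particles of $\mathfrak f$ become adjacent so $\mathfrak f$ is no longer activated and $c_{y,x}(\eta^{x,y})=1$; since $\nu_\rho(\eta)=\nu_\rho(\eta^{x,y})$, the detailed balance condition $\nu_\rho(\eta)c_{x,y}(\eta)=\nu_\rho(\eta^{x,y})c_{y,x}(\eta^{x,y})$ would force $1+\alpha=1$, contradicting $\alpha\ne 0$.
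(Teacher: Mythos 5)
Your proposal is correct and follows essentially the same route as the paper: invariance is reduced to the per-configuration balance of incoming and outgoing rates, the symmetric exclusion part cancels trivially, and the circulation part is handled face by face through an enumeration of the two-particle patterns on a face (your variant of checking that the two sums $\sum_{(x,y)}c^R_{x,y}(\eta)$ and $\sum_{(x,y)}c^R_{x,y}(\eta^{x,y})$ each vanish identically is just a slight reorganization of the paper's single combined face-by-face identity, and both verifications are correct). Your explicit detailed-balance counterexample for $\alpha\neq 0$ is also fine, and in fact more detailed than the paper, which leaves that verification to the reader.
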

\begin{proof}
Since the dynamics is conservative it is enough to verify  that the canonical uniform measures are invariant. This corresponds to show that
\begin{equation}\label{eq:equivinv}
\sum_{(x,y)\in E_N}c_{x,y}(\eta)=\sum_{(x,y)\in E_N}c_{y,x}(\eta^{x,y})\,,
\end{equation}
for any configuration $\eta$.
The first term on the right-hand side of \eqref{ratej} corresponds to the rates of the simple exclusion process so that it satisfies this relationship. We can just restrict to the second term on the right-hand side of \eqref{ratej}. We need to check therefore that
\begin{align}
\sum_{(x,y)\in E_N}\eta(x)\left[\tau_{\mathfrak f^+(x,y)}g(\eta)-\tau_{\mathfrak f^-(x,y)}g(\eta)\right]
=\sum_{(x,y)\in E_N}\eta^{x,y}(y)\left[\tau_{\mathfrak f^+(y,x)}g(\eta^{x,y})-\tau_{\mathfrak f^-(y,x)}g(\eta^{x,y})\right]\,.\label{fognini}
\end{align}
Using the fact that $\eta^{x,y}(y)=\eta(x)$ and $\mathfrak f^\pm(y,x)=\mathfrak f^\mp(x,y)$, the right-hand side of the previous display  becomes
$$
\sum_{(x,y)\in E_N}\eta(x)\left[\tau_{\mathfrak f^-(x,y)}g(\eta^{x,y})-\tau_{\mathfrak f^+(x,y)}g(\eta^{x,y})\right]\,.
$$
Inserting this expression in \eqref{fognini},  the stationary condition becomes
\begin{align}
\sum_{(x,y)\in E_N}\eta(x)\left[\tau_{\mathfrak f^+(x,y)}g(\eta)+\tau_{\mathfrak f^+(x,y)}g(\eta^{x,y})\right]=\sum_{(x,y)\in E_N}\eta(x)\left[\tau_{\mathfrak f^-(x,y)}g(\eta)+\tau_{\mathfrak f^-(x,y)}g(\eta^{x,y})\right]\,.\label{eq:group}
\end{align}
Let us fix an un-oriented  face $ \mathfrak{f}$ and let us call $f^\circlearrowleft\in F^{\circlearrowleft}_N$ the corresponding  anticlockwise oriented face. The contribution from the left-hand side of \eqref{eq:group} that contains functions shifted by $\tau_{\mathfrak f}$ is given by
\begin{equation}\label{acea}
\sum_{(x,y)\in f^\circlearrowleft }\eta(x)\left[\tau_{\mathfrak f}g(\eta)+\tau_{\mathfrak f}g(\eta^{x,y})\right]\,,
\end{equation}
while instead from the right-hand side of \eqref{eq:group} we have
\begin{equation}\label{acea2}
\sum_{(x,y)\in f^\circlearrowright }\eta(x)\left[\tau_{\mathfrak f}g(\eta)+\tau_{\mathfrak f}g(\eta^{x,y})\right]\,.
\end{equation}
We claim that \eqref{acea} and \eqref{acea2} coincide for any configuration $\eta$ and for any face $\mathfrak f$. Note that in both expressions  \eqref{acea} and  \eqref{acea2}, we have local functions depending just on the occupation numbers on the face $\mathfrak f$. We need then just to cheek the validity of this relationship for any possible configuration of particles on the face $\mathfrak f$, disregarding the configuration's values  outside the face.

When the total number of the particles on the face $\mathfrak f$ is equal to $0,1,3,4$ then the equality between \eqref{acea} and \eqref{acea2} is immediate, since all the terms are identically zero.
The only non trivial case is when the total number of particles  equals $2$. In Figure \ref{fig: dueparticelle} we check the validity of this statement for two special configurations $\eta$. All the remaining cases are obtained from these by a suitable rotation.
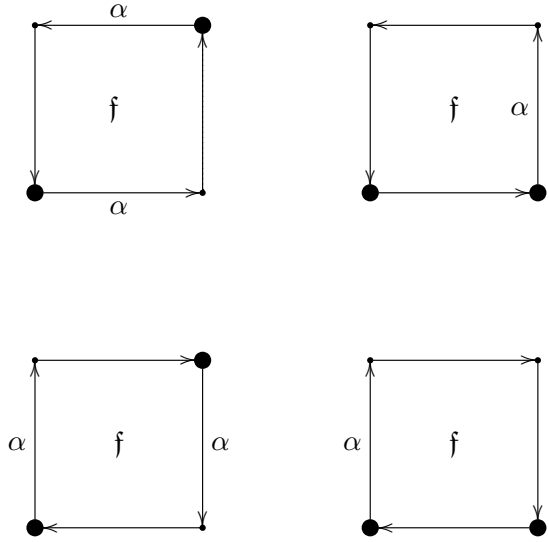
\begin{figure}[]
	
	\[
	\hspace{0.1cm}
	\entrymodifiers={+<0.5ex>[o][F*:black]}
	\xymatrix@C=2cm@R=2cm{\ar[d]
		&  *+[o][F*:black]{} \ar@{.}[d] \ar[l]_{\textit{\normalsize $ \alpha $}}& \ar[d]  & \ar[l]  \\
		*+[o][F*:black]{} \ar[r]_{\textit{\normalsize $ \alpha $}}\ar@{}[ur]|{\textit{\normalsize $\mathfrak f$ }}  &  \ar[u]   &   *+[o][F*:black]{}\ar@{}[ur]|{\textit{\normalsize $\mathfrak f$}}\ar[r]  & *+[o][F*:black]{}   \ar[u]^{\textit{\normalsize $ \alpha $}} \\
		\ar[r]
		&  *+[o][F*:black]{} \ar[d]^{\textit{\normalsize $ \alpha $}}&   \ar[r] & \ar[d] \\
		*+[o][F*:black]{} \ar[u]^{\textit{\normalsize $ \alpha $}}\ar@{}[ur]|{\textit{\normalsize $\mathfrak f$}}  &  \ar[l]   &   *+[o][F*:black]{}\ar[u]^{\textit{\normalsize $ \alpha $}}\ar@{}[ur]|{\textit{\normalsize $\mathfrak f$}}  & *+[o][F*:black]{}  \ar[l]  \\
	}
	\vspace{0.3cm}
	\]
	\caption{\small{On the top we represent the value of \eqref{acea} for two different configurations of particles (left and right respectively). Since \eqref{acea} corresponds to the sum of 4 terms associated to directed edges around the face, we write near to each edge the corresponding contribution.  If near an edge there is anything written, it means that the corresponding contribution is zero. At the bottom we have the same but for \eqref{acea2}. Note that on the top, the edges form anticlockwise oriented cycles while, at the bottom, we have clockwise oriented cycles. The values of \eqref{acea} and \eqref{acea2} for a configuration of particles is obtained summing all the values associated to the edges on the corresponding figure.}}
		\label{fig: dueparticelle}
	
\end{figure}
Therefore,  \eqref{eq:group} can then be rewritten as
\begin{align}
\sum_{\mathfrak f\in \mathcal F_N}\sum_{(x,y)\in f^\circlearrowleft}\eta(x)\left[\tau_{\mathfrak f}g(\eta)+\tau_{\mathfrak f}g(\eta^{x,y})\right]=\sum_{\mathfrak f\in \mathcal F_N}\sum_{(x,y)\in f^\circlearrowright}\eta(x)\left[\tau_{\mathfrak f}g(\eta)+\tau_{\mathfrak f}g(\eta^{x,y})\right]\,,\label{eq:group2}
\end{align}
which  is satisfied since there is equality for each $\mathfrak f$.
We recall that in \eqref{eq:group2} we are again using the convention that given $\mathfrak f\in \mathcal F_N$ we denote by $f^\circlearrowleft\in F_N^{\circlearrowleft}$  and
by $f^\circlearrowright\in F_N^{\circlearrowright}$, respectively, the corresponding anticlockwise oriented  and clockwise oriented faces.

The fact that, unless $\alpha=0$, the dynamics is not reversible, can be easily shown verifying that the detailed balance condition is not satisfied.
\end{proof}

\subsection{Generalized gradient condition}

In this section we show that the model we introduced satisfies a generalized gradient condition. We start recalling a classic discrete Hodge decomposition.

\subsubsection{Discrete vector fields and Hodge decomposition}

A discrete vector field is a map $\phi: E_N\to \mathbb R$ that satisfies the antisymmetry property $\phi(x,y)=-\phi(y,x)$. The divergence of a discrete vector field is defined by
\begin{equation}\label{divergence}
\nabla\cdot \phi(x):=\sum_{y: (x,y)\in E_N}\phi(x,y)\,.
\end{equation}
A discrete vector field is called of gradient type if there exists a function $f:\Lambda_N\to \mathbb R$ such that
$\phi(x,y)=f(y)-f(x)$ and in this case we write shortly $\phi=\nabla f$.

We recall briefly the Hodge decomposition for discrete vector fields considering  the two dimensional discrete torus $\Lambda_N$. We  denote by $\Gamma^0$ the collection of real valued functions defined on the set of vertices, that is:
$\Gamma^0:=\{g\, :\, \Lambda_N\to \mathbb R\}$. We denote by  $\Gamma^1$ the vector space of discrete vector fields
endowed with the scalar product
\begin{equation}\label{sc}
\langle\phi,\varphi\rangle:=\sum_{(x,y)\in E_N}\phi(x,y)\varphi(x,y)\,, \qquad \phi,\varphi\in \Gamma^1\,.
\end{equation}
Finally, we denote by $\Gamma^2$ the vector space of 2-forms. A 2-form is a map $\psi$ from the set of oriented faces $F_N$ to $\mathbb R$ which  is antisymmetric with respect to the change of orientation i.e. such that $\psi(-f)=-\psi(f)$. The boundary $\delta\psi$ of a 2-form $\psi$ is a discrete vector field defined by
\begin{equation}\label{defd}
\delta\psi(e):=\sum_{f\,:\, e\in f}\psi(f)\,.
\end{equation}
By construction $\nabla\cdot \delta\psi=0$ for any $\psi$.
The two-dimensional discrete Hodge decomposition \cite{B,L} is written as the direct sum
\begin{equation}\label{dish2}
\Gamma^1=\nabla \Gamma^0\oplus \delta\Gamma^2\oplus\Gamma^1_H\,,
\end{equation}
where the orthogonality is with respect to the scalar product given in  \eqref{sc}. The discrete vector fields on $\nabla\Gamma^0$ are the gradient ones. The dimension of $\nabla\Gamma^0$ is $N^2-1$. The vector subspace $\delta\Gamma^2$ contains all the discrete vector fields that can be obtained by \eqref{defd} from a given 2-form $\psi$. The dimension of $\delta\Gamma^2$ is $N^2-1$. Elements of $\delta\Gamma^2$ are called \emph{circulations}. The dimension of $\Gamma_H^1$ is simply 2. Discrete vector fields in $\Gamma^1_H$ are called \emph{harmonic}. A basis in $\Gamma^1_H$ is given by the vector fields $\varphi^{(1)}$ and $\varphi^{(2)}$ defined by
\begin{equation}\label{cenes}
\varphi^{(i)}\left(x,x+e^{(j)}\right):=\delta_{i,j}\,, \qquad i,j=1,2\,.
\end{equation}
The decomposition \eqref{dish2} holds in any dimension. For the $d-$dimensional torus the dimension of the
harmonic subspace is $d$.
Given a discrete vector field $\phi\in \Gamma^1$ we write
\begin{equation}\label{hooo}
\phi=\phi^\nabla+\phi^\delta+\phi^H
\end{equation}
to denote the unique splitting in the three orthogonal components.

\subsubsection{Generalized gradient condition}

To determine the scaling limits of a model, a key role is played by the \emph{instantaneous current} $j_\eta(x,y)$. For a generic dynamics having generator in the form  \eqref{generatore}, this is defined by
\begin{equation}\label{defist}
j_\eta(x,y):=c_{x,y}(\eta)-c_{y,x}(\eta)\,.
\end{equation}
The translational covariance property of the rates  given in \eqref{cova}
is inherited by the instantaneous current.
For any fixed configuration $\eta$, $j_\eta$ is a discrete vector field.
The classic form of the gradient condition for stochastic lattice gases requires that the instantaneous current can be written as a gradient
\begin{equation}\label{grh}
j_\eta(x,y)=\tau_yh(\eta)-\tau_xh(\eta)
\end{equation}
for a suitable local function $h$. In order to compute the hydrodynamic scaling limit of a model, it is useful to be able to perform a double discrete summation by parts (see some details in Section \ref{SL}). This double summation  by parts  is possible under some generalized gradient condition. We consider two of them  and then we show that they are indeed equivalent.

The first generalized gradient condition is, for example, the one in Definition 2.5  page 61 of \cite{KL}. For simplicity we restrict ourselves to the case of nearest neighbours jumps.
\begin{definition}\label{def1}
A stochastic lattice gas satisfies a generalized gradient condition if its instantaneous current can be written as
	\begin{equation}\label{grgen}
	j_\eta(x,x+e^{(i)})=\sum_{n=1}^{n_0}\sum_{y\in \Lambda_N}
	p_{i,n}(y-x)\tau_yh_{i,n}(\eta)\,.
	\end{equation}
	In the above formula $i$ is an index that labels the dimension so that $i=1,\dots,d$ ($d=2$ in our case) while $n_0$ is a given natural number.
	
We have a collection of local functions $h_{i,n}$
	and a collection of finite support functions $p_{i,n}$ such that
	$\sum_{z\in \Lambda_N}p_{i,n}(z)=0$ for any $i,n$.	
\end{definition}

The finite support condition is relevant when the model is defined on an infinite lattice. Since in our setting the lattice is finite, any function is local. In this framework the finite support condition means that the support is finite and does not depend on the size $N$ of the lattice. More precisely the functions considered do not depend on $N$ too.

The second possible generalized gradient condition can be stated as follows (see \cite{S}). We introduce the \emph{gradient space} $\mathcal G$ defined by the collection of functions $g:\{0,1\}^{\Lambda_N}\to \mathbb R$ that can be written as
\begin{equation}
\mathcal G:=\left\{ g\,:\, g=\sum_{i=1}^d\left(\tau_{e^{(i)}}h_i-h_i\right)\right\},
\end{equation}
where $\left\{h_i\right\}_{i=1}^d$ is a collection of local functions.
\begin{definition}\label{def2}
A stochastic lattice gas satisfies a generalized gradient condition if
\begin{equation}\label{secondgrad}
j_\eta(0,e^{(i)})\in \mathcal G\,, \qquad i=1,\dots , d\,.
\end{equation}
\end{definition}
Note that by the translational covariance property of the instantaneous current, the Definition \ref{def2} implies that there exist some local functions $\left\{h_{i,j}\right\}_{j=1}^d$ such that
\begin{equation}\label{alvar}
j_\eta(x,x+e^{(i)})=\sum_{j=1}^d\left(\tau_{x+e^{(j)}}h_{i,j}-\tau_xh_{i,j}\right)\,, \qquad \forall i=1,\dots, d\,.
\end{equation}
We will now show that Definitions \ref{def1} and \ref{def2} are indeed equivalent.
\begin{lemma}
Definitions \ref{def1} and \ref{def2} are equivalent.
\end{lemma}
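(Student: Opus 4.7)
The implication Definition \ref{def2} $\Rightarrow$ Definition \ref{def1} is essentially for free. Using translational covariance one rewrites \eqref{alvar} as
\begin{equation*}
j_\eta(x,x+e^{(i)})=\sum_{j=1}^d \sum_{y\in\Lambda_N}\bigl(\delta_{e^{(j)}}(y-x)-\delta_0(y-x)\bigr)\tau_y h_{i,j}(\eta),
\end{equation*}
which is exactly the form \eqref{grgen} with $n_0=d$ and $p_{i,j}(z):=\delta_{e^{(j)}}(z)-\delta_0(z)$, a function of zero total mass.

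For the converse $\Rightarrow$ Definition \ref{def2}, since $\mathcal G$ is a vector space and $j_\eta(0,e^{(i)})$ is a finite sum of objects of the form $\sum_y p(y)\tau_y h(\eta)$ with $\sum_z p(z)=0$, it suffices to show that every such object lies in $\mathcal G$. The plan is to represent $p$ itself as a ``discrete divergence''. Since $\sum_z p(z)=0$, the Hodge decomposition \eqref{dish2} (applied on $\Lambda_N$, viewed as a torus, or by a direct dimension-by-dimension primitive on $\mathbb Z^d$ in the infinite-volume formulation) guarantees a discrete vector field $\phi\in\Gamma^1$ with $\nabla\cdot\phi=p$. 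Setting $q_j(y):=\phi(y,y+e^{(j)})$, the definition \eqref{divergence} of the divergence becomes
\begin{equation*}
p(y)=\sum_{j=1}^d\bigl(q_j(y)-q_j(y-e^{(j)})\bigr).
\end{equation*}

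Step two is a shift of the summation variable. Inserting the above identity and changing variables $y\mapsto y-e^{(j)}$ in the second piece of each coordinate term gives
\begin{equation*}
\sum_y p(y)\tau_y h=\sum_{j=1}^d\sum_y q_j(y)\bigl(\tau_y h-\tau_{y+e^{(j)}}h\bigr)=\sum_{j=1}^d\bigl(\tau_{e^{(j)}}\tilde h_j-\tilde h_j\bigr),
\end{equation*}
with $\tilde h_j:=-\sum_y q_j(y)\tau_y h$, as a direct check using $\tau_{e^{(j)}}\tilde h_j=-\sum_y q_j(y-e^{(j)})\tau_y h$ confirms. This exhibits the summand as an element of $\mathcal G$ and completes the proof.

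The only genuine technical point is the existence of $\phi$ with $\nabla\cdot\phi=p$, which rests on $\sum_z p(z)=0$; I expect no further obstacle since this is exactly the content of the identification of $(\nabla\Gamma^0)^\perp\cap\delta\Gamma^2\oplus\Gamma^1_H$ as the kernel of the codifferential in \eqref{dish2}, or equivalently of the elementary fact that a zero-mean function on the torus is a discrete divergence. Locality of the resulting $\tilde h_j$ (so that Definition \ref{def2} is applied with genuinely local functions, not $N$-dependent ones) follows from the finite support of the original $p_{i,n}$'s: one can always choose $\phi$ supported in a fixed neighborhood of $\supp p$, by performing the primitive construction one coordinate at a time.
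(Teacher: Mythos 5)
Your proof is correct, and at its core it runs on the same telescoping mechanism as the paper's, just packaged differently. The paper first decomposes the zero-mass signed measure $p$ into dipoles $m_\ell\left(\delta_{x^{\ell,+}}-\delta_{x^{\ell,-}}\right)$ and then telescopes each difference $\tau_y h-\tau_x h$ along an explicit L-shaped lattice path, producing the local functions in \eqref{lfg}; you instead solve the discrete divergence equation $\nabla\cdot\phi=p$ once and perform a single discrete summation by parts, which is cleaner and manifestly dimension-independent. The two are really the same object, since a compactly supported solution $\phi$ is precisely a superposition of unit flows along such dipole paths. The one place where the paper's explicit construction does real work, and where your write-up stays at the level of a claim, is the locality/$N$-independence of $\phi$ (hence of your $\tilde h_j$): the Hodge decomposition \eqref{dish2} on the torus only yields existence of some $\phi$, in general non-local and $N$-dependent, and the naive row-by-row primitive does not immediately give compact support because individual row sums of $p$ need not vanish. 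The standard fix, which is what your ``one coordinate at a time'' remark amounts to, is to subtract the row sums placed on a fixed reference column, take the primitive in direction $1$ row by row (compactly supported since each modified row now has zero sum), and then take the primitive in direction $2$ along the reference column, using that the row sums themselves add up to zero. With that detail spelled out your argument is complete, and arguably more transparent than the paper's extremal decomposition of signed measures.
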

\begin{proof}
First observe that  \eqref{alvar} coincides with \eqref{grgen} with $n_0=d$ for a special choice of the functions $p$'s, so that Definition \ref{def2} is a special case of Definition \ref{def1}. Conversely, we will now show  that any current given as  in \eqref{grgen} can be rewritten as in  \eqref{alvar}.

For simplicity we discuss the case $d=2$. Any signed measure $p$ on $\Lambda_N$ with finite support and having equal positive and negative mass, i.e. such that $\sum_x p(x)=0$ can be decomposed as $p=\sum_\ell \hat {p}^\ell$ where each $\hat{p}^\ell$ is a signed measure of the form $p^\ell=m_\ell\left(\delta_{x^{\ell,+}}-\delta_{x^{\ell,-}}\right)$, where $x^{\ell,\pm}$ are elements of the lattice and $m_\ell$ are positive numbers. The proof of this fact is rather elementary and corresponds to write a signed measure as a convex combination
of the extremal ones. This decomposition is not unique. Using this fact,  \eqref{grgen} is equivalent to
\begin{equation}
\label{grgenaltra}
j_\eta(x,x+e^{(i)})=\sum_k \left(\tau_{x+x^{i,k,+}}\hat{h}_{i,k}(\eta)-\tau_{x+x^{i,k,-}}\hat{h}_{i,k}(\eta)\right)
\end{equation}
where  $x^{i,k,\pm}$ are points of the lattice and the functions $\hat h$ are obtained multiplying the local functions $h$ by the coefficients $m_\ell^{i,n}$
of the extremal decomposition of $p_{i,n}$. Take now a local function $h$ and consider $\tau_yh-\tau_xh$ where $x=(x_1,x_2)$ and $y=(y_1,y_2)=(x_1+n_1e^{(1)},x_2+n_2e^{(2)} )$, $n_i>0$. Other cases can be discussed in the same way.
If we define the local functions
\begin{equation}\label{lfg}
\left\{
\begin{array}{l}
g_1=\sum_{j=0}^{n_1-1}\tau_{je^{(1)}}h\,,\\
g_2=\sum_{j=0}^{n_2-1}\tau_{n_1e^{(1)}+je^{(2)}}h\,,
\end{array}
\right.
\end{equation}
by construction we have
\begin{equation}
\nonumber
\left\{
\begin{array}{l}
\tau_{x+e^{(1)}}g_1-\tau_xg_1=\tau_{x+ne^{(1)}}h-\tau_xh\,,\\
\tau_{x+e^{(2)}}g_2-\tau_xg_2=\tau_yh-\tau_{x+ne^{(1)}}h\,.
\end{array}
\right.
\end{equation}
We obtain therefore
\begin{equation}\label{conc}
\tau_yh-\tau_xh=\left(\tau_{x+e^{(1)}}g_1-\tau_xg_1\right)+\left(\tau_{x+e^{(2)}}g_2-\tau_xg_2\right).
\end{equation}
Using  \eqref{conc},  we can construct some proper local functions $ \{g^{i,k}_j\}_{j=1,2} $ such that we can rewrite \eqref{grgenaltra} as
\begin{equation}\label{eq:grgenaltra2}\begin{split}
j_\eta(x,x+e^{(i)})&=\underset{k}{\sum} \,\underset{j=1}{\overset{2}{\sum}}\,\left(\tau_{x+x^{i,k,-}+e^{(j)}}g^{i,k}_j(\eta)-\tau_{x+x^{i,k,-}}g^{i,k}_j(\eta)\right)\\
&= \underset{j=1}{\overset{2}{\sum}}\,\left(\tau_{x+e^{(j)}}\left(\underset{k}{\sum}
\tau_{x^{i,k,-}}g^{i,k}_j(\eta)\right)-\tau_{x}\left(\underset{k}{\sum}\tau_{x^{i,k,-}}g^{i,k}_j(\eta)\right)\right).\nonumber
\end{split}\end{equation}
Defining $ h^{i,j}(\eta):=\underset{k}{\sum}\,\tau_{x^{i,k,-}}g^{i,k}_j(\eta) $, we have showed that \eqref{grgen} can be rewritten as  \eqref{alvar},  for a suitable choice of local functions.

\end{proof}
Since the two definitions are equivalent we will use the simpler one given in  \eqref{alvar}.

\subsubsection{Functional Hodge decomposition}\label{ss:func hod}
We briefly discuss some geometric features of the above generalized gradient conditions.
Consider the class of discrete vector fields $j_\eta$
that depend, in a translational covariant way on the  configurations of particles, i.e.
such that $$j_\eta(x,y)=j_{\tau_z\eta}(x+z,y+z).$$  As we already discussed, the instantaneous current for a translational covariant model of interacting particles, is always of this type. According to the results in \cite{DG} (in particular Theorems 1 and 2 there), there exists a functional version of the discrete Hodge decomposition \eqref{dish2}. Discrete vector fields of the form \eqref{grh} play the role of gradient discrete vector fields. The functional version of the circulations, in dimension 2, is given by the vector fields that can be written as
\begin{equation}\label{cirh}
j_\eta(x,y)=\tau_{\mathfrak f^+(x,y)}g(\eta)-\tau_{\mathfrak f^-(x,y)}g(\eta),
\end{equation}
for a suitable function $g$. Note that both \eqref{grh} and \eqref{cirh} are translational covariant and, moreover, for any fixed $\eta$ we have that \eqref{grh} is an element of $\nabla \Gamma^0$ while \eqref{cirh} is an element of $\delta \Gamma^2$. The role of harmonic vector fields is played by vectors of the form
\begin{equation}\label{Ch}
C_i(\eta)\varphi^{(i)}\,, \qquad  i=1,2,
\end{equation}
where $C_i$ are functions on configurations, which are invariant by translations and $\varphi^{(i)}$ are defined by \eqref{cenes}. Theorem 2 in \cite{DG} says that any translational covariant discrete vector field $j_\eta$ can be written in a unique way (up to a suitable addition of translation invariant functions) as the sum of a term of the form \eqref{grh}, a term of the form \eqref{cirh} and two terms of the form \eqref{Ch}, one for each $i$. The important fact of this decomposition is that the functions $h$ and $g$ are not necessarily local and the decomposition may depend on the size $N$ of the lattice.

Consider an instantaneous current satisfying \eqref{alvar}. Then by a direct computation it is possible to check that
$$
\sum_{x\in \Lambda_N}j_\eta(x,x+e^{(j)})=\langle j_\eta, \varphi^{(j)}\rangle=0\,, \qquad \forall \eta\,, \forall j=1,\dots d\,.
$$
This means that for each fixed $\eta$ the discrete vector field $j_\eta$
is orthogonal to the harmonic subspace.
This implies that the functions $C^{(i)}$ in formula (69) of Theorem 2 in \cite{DG}, that correspond to the ones in equation \eqref{Ch}, are identically zero. The instantaneous current for any model satisfying \eqref{alvar} can therefore be
written as
\begin{equation}\label{istcur-1}
j_\eta(x,y)= \big[\tau_yh(\eta)-\tau_x h(\eta)\big]+\big[\tau_{\mathfrak f^+(x,y)}g(\eta)-\tau_{\mathfrak f^-(x,y)}g(\eta)\big]\,,
\end{equation}
for suitable functions $h$ and $g$, not necessarily local.

The model with rates \eqref{ratej} has the peculiar feature that the instantaneous current can be decomposed like \eqref{istcur-1} with $h$ and $g$ being local functions.
Indeed by a direct computation using the special form of the local function defined in \eqref{defg} we have that for the rates in \eqref{ratej} the instantaneous current has the form \eqref{istcur-1} with $h(\eta)=-\eta(0)$ and the function $g$ corresponding to the one defined by \eqref{defg}.

 For an instantaneous current like \eqref{istcur-1} we call respectively
 \begin{equation}\label{eq:j^g&j^g}
 j^\nabla_\eta(x,y):= \big[\tau_yh(\eta)-\tau_x h(\eta)\big] \text{ and  } j^\delta_\eta(x,y):= \big[\tau_{\mathfrak f^+(x,y)}g(\eta)-\tau_{\mathfrak f^-(x,y)}g(\eta)\big]\
 \end{equation}
 the gradient part of the current $ j^\nabla_\eta(x,y) $ and the circulation part of the current $ j^\delta_\eta(x,y) $. At the end of next section we will observe that the hydrodynamics of the particle system will be related only to the gradient part, when we observe just the density. The circulation part is relevant when we observe instead the current  too.

\section{Scaling limits}\label{SL}

There are two natural empirical objects suitable to describe the scaling limit of the model: the empirical measure and the empirical integrated current. We consider the model just in dimension 2 but in some formulas we keep the notation $d$ for the dimension. We do this to make some definitions clearer. For the specific computations of this paper $d$ can always be substituted by $2$.

\subsection{Empirical measure and current}
Let $\mathcal M^+(\Lambda)$ be the space of finite positive measures on $\Lambda$ with total mass $\leq 1$ and endowed with the weak topology. Let $\pi_N: \Sigma_N\to \mathcal M^+(\Lambda)$ be the map that associates to the configuration $\eta $ its  empirical measure $\pi^N(\eta,du)$ defined by
\begin{equation}
\pi^N(\eta,du):=\frac{1}{N^d}\sum_{x\in \Lambda_N}\eta(x)\delta_{x}(du)\,,
\end{equation}
where $\delta_v(du)$ denotes a Dirac measure at $v\in \Lambda$. Let $\rho:\Lambda\rightarrow [0,1]$ be a measurable function.  We say that a sequence of probability measures $\mu_N$ on $\Sigma_N$ is associated to the density profile $\rho$ if for any $f\in C(\Lambda)$
\begin{equation}\label{associato}
\lim_{N\to +\infty}\mu_N\left(\eta\in\Sigma_N: \left|\int_{\Lambda} f(u)\,\pi^N(\eta,du)-\int_{\Lambda}f(u)\rho(u)du\right|>\epsilon\right)=0\,, \qquad \forall \epsilon>0\,.
\end{equation}
This is the same as saying that the sequence of measures $\pi^N(\eta,du)\in \mathcal M^+(\Lambda)$ converges weakly, as $N\to+\infty$, and in probability with respect to $\mu_N$,  to the measure $\rho(u)du\in \mathcal M^+(\Lambda)$.  With a small abuse of notation we denote again by $\pi^N$ the map
$\pi^N :\mathcal{D}([0,T], \Sigma_N)\to \mathcal{D}([0,T], \mathcal{M}^{+}(\Lambda) ) $ that associates to the trajectory
$(\eta_t)_{t\in[0,T]}$ the path $\pi^N\left[(\eta_t)_{t\in[0,T]}\right]:= (\pi_N(\eta_t))_{t\in[0,T]}$ and we set
$\pi^N_t(du):= \pi^N(\eta_t,du)$.
We denote by $\mathbb P_{\mu_N}$
the probability measure on $\mathcal{D}([0,T], \Sigma_N)$ when the particles are distributed at time zero according to the sequence of probability measures $\mu_N$ and the Markovian dynamics is determined by the rates \eqref{ratej}  multiplied by a factor of $N^2$,
while we indicate with $ \mathbb{Q}_N $ the probability measure induced by the empirical measure on the space of c\`adl\`ag trajectories $ \mathcal{D}([0,T],\mathcal{M}^+(\Lambda)) $ that is $ \mathbb{Q}_N:=\mathbb{P}_{\mu_N}\circ (\pi^N)^{-1}$.
 The expectation with respect to  $\mathbb P_{\mu_N}$ will be denoted by $\mathbb E_{\mu_{N}}$.

We define now the integrated empirical current field $\mathcal J^N_t(\cdot)$ as a bounded linear functional on a proper Hilbert space as it will be explained in Section \ref{sec:currentop}. This is due to technical issues related to the tightness of the related distribution measures. Scaling limits and large deviations for the integrated empirical current were proven in \cite{BDGJL3} for the simple exclusion process but the proof of the tightness is incomplete there. The topological setting that we use in this paper follows the approach of \cite{BLG}.

For any $ t\in [0,T] $ we denote, respectively, by $ \mathcal{N}^N_{x,y}(t) $ and $ \mathcal{N}^N_{y,x}(t) $ the number of particles that crossed the bond $ (x,y) $ and the ones that crossed the bond $ (y,x) $ in the time window $ [0,t]$ for the process with rates multiplied by $N^2$. The integrated current  field (in the following simply the current field) is  then introduced as the functional acting on continuous vector fields $G:\Lambda \rightarrow \mathbb R^d$ in the following way
\begin{equation}\label{orleans}
 \mathcal J^N_t(G):=\frac{1}{N^d}\sum_{(x,y)\in E_N}G_N(x,y)\mathcal N^N_{x,y}(t),
\end{equation}
where
\begin{equation}\label{discvec}
G_N(x,y):=\int_{(x,y)}G\cdot dl\,
\end{equation}
is the line integral on the oriented segment $(x,y)$ of the vector field $G$. Note that $|G_N(x,y)|\leq |G|_\infty/N$ where $|G|_\infty:=\sup_{i=1,\dots ,d}\sup_{x\in \Lambda}|G_i(x)|$.
Formula \eqref{orleans} can be written also as
\begin{equation}\label{orleans2}
\frac{1}{N^d}\sum_{\{x,y\}\in \mathcal E_N}G_N(x,y)J^N_{x,y}(t),
\end{equation}
where $J^N_{x,y}(t):=\mathcal N^N_{x,y}(t)-\mathcal N^N_{y,x}(t)$. Note that the two expressions  \eqref{orleans} and \eqref{orleans2} are equivalent  since on the right-hand side of \eqref{orleans2} there is a product of two antisymmetric functions (on the pair $x,y$) and the expression is not ambiguous.

The microscopic continuity equation related to the conservation of mass is
\begin{equation}\label{eq:microcl}
\eta_t(x)-\eta_0(x)+\nabla\cdot J^N_t(x)=0\,, \qquad \forall x\in \Lambda_N\,, t>0\,,
\end{equation}
where $\nabla \cdot J^N_t (x)$ is defined in \eqref{divergence}.
This equation allows deriving,  in a weak sense, a discrete continuity equation relating the empirical measure and the current field, namely we have that
\begin{equation}\label{cont}
\int_\Lambda f \pi^N(\eta_t,du)-\int_\Lambda f \pi^N(\eta_0, du)-\mathcal J^N_t\left(\nabla f\right) =0\,, \qquad \forall f\in C^1(\Lambda)\,, \forall t\in[0,T]\,.
\end{equation}
From the general theory of interacting particle systems (see \cite{Spohn} part II Section 2.3) we have that
\begin{equation}\label{eq:martingale}
M^N_{x,y}(t):=J^N_{x,y}(t)-N^2\int^t_0 ds \,j_{\eta_s}(x,y)\,, \qquad (x,y)\in E_N
\end{equation}
is a martingale with respect to the natural filtration and therefore $\mathbb E_{\mu_N}(M^N_{x,y}(t))=0$, for any initial condition $\mu_N$. Considering  a test vector field $G$ we obtain the martingales
\begin{equation}\label{martc}
M^N_G(t):=  \mathcal J^N_t(G)-
\frac{N^{2-d}}{2}\int_0^t\sum_{(x,y)\in E_N}j_{\eta_s}(x,y)G_N(x,y) ds.
\end{equation}
The factor $1/2$ in the above formula appears since for a symmetric function $s(x,y)$ we have
$\sum_{\{x,y\}\in \mathcal E_N}s(x,y)=1/2\sum_{(x,y)\in  E_N}s(x,y)$.
The martingale in \eqref{martc} can be transformed, in the case of the rates \eqref{ratej}, using some discrete integration by parts and the special form of the instantaneous current into
\begin{equation}\label{eq:martc2}
M^N_G(t)= \mathcal J^N_t(G) - N^{2-d}\int_0^tds\left(\sum_{x\in \Lambda_N}\eta_s(x)\nabla\cdot G_N(x) +\sum_{\mathfrak f\in \mathcal F_N}\tau_{\mathfrak f}g(\eta_s)\sum_{(x,y)\in f^\circlearrowleft}G_N(x,y)\right).
\end{equation}
On the right-hand side of the above equation the first term inside the integral corresponds to a discrete divergence, while the second one is a discrete version of a two-dimensional curl. By Taylor expansion, for a $C^3$ vector field $G$, we have indeed
\begin{equation}\label{eq:aproxnablas}
\left\{
\begin{array}{l}
N^2\nabla\cdot G_N(x)=\nabla\cdot G(x)+O(1/N^2)\,,\\
N^2\sum_{(x,y)\in f^\circlearrowleft}G_N(x,y)=\nabla^{\perp}\cdot G(z)+O(1/N^2),
\end{array}
\right.
\end{equation}
where the infinitesimal terms are uniform, $z$ is the center of the face $\mathfrak f$
and  we used the notation
$\nabla^\perp\cdot G(z):=-\partial_{z_2}G_1(z)+\partial_{z_1}G_2(z)$.

The error terms can be estimated by $(C/N^2) \underset{\left\{k, i+j \leq 3\right\}}\sup|\partial^i_{x_1}\partial^j_{x_2}G_k|_\infty$, where $C$ is a universal constant. By \eqref{eq:aproxnablas} the sums in  \eqref{eq:martc2} are directly related to discretized versions of differential operations on the vector field $G$ and they can be approximated by Riemann sums up to negligible terms.

\begin{remark}\label{re:divj^g}
	For  the currents $ j^\nabla_\eta(x,y) $ and  $ j^\delta_\eta(x,y) $ of \eqref{eq:j^g&j^g}, we have that
	 \begin{equation}\label{eq:divj^g}
	 \nabla\cdot j^\delta_\eta (x)=0,\, \forall x\in \Lambda_N \Rightarrow \nabla\cdot(j^\nabla_\eta +j^\delta_\eta)(x)= \nabla\cdot j^\nabla_\eta (x), \,\forall x\in \Lambda_N ,
	 \end{equation}
	 this means that the hydrodynamics of the empirical measure  will be related only to the gradient part of the instantaneous current,  because the continuity equation \eqref{cont} was  obtained from the  microscopic conservation law \eqref{eq:microcl} and the current $J_t^N$ is related to the instantaneous current by the martingales \eqref{eq:martingale}.
\end{remark}

\begin{remark}\label{re:qmart}
From the general theory (see for example \cite{Spohn} Section II.2.3 or \cite{KL} Appendix 1 Section 5) we define the  martingale $N^N_G(t)$
\begin{equation}\label{eq:martN}
N^N_G(t)= \left[M^N_G(t)\right]^2-N^{2-2d}\int^t_0 ds \sum_{\{x,y\}\in \mathcal{E}_N}(c_{x,y}(\eta_s)+c_{y,x}(\eta_s))G^2_N(x,y).
\end{equation}
The second term on the right-hand side of last display  is called the quadratic variation of $M^N_G$.
Since $N^N_G$ is a martingale we have
\begin{equation}\label{eq:qmart}
\mathbb E_{\mu_N}\left[\Big(M^N_G(t)\Big)^2\right]=\frac{N^2}{N^{2d}}\mathbb E_{\mu_N}\left[\int^t_0 ds \sum_{\{x,y\}\in \mathcal{E}_N}(c_{x,y}(\eta_s)+c_{y,x}(\eta_s))G^2_N(x,y)\right]\leq \frac{2dt\big(1+|\alpha|\big)|G|_\infty^2}{N^d}\,.
\end{equation}
\end{remark}

\subsection{Current topology}\label{sec:currentop}

We introduce now, following the approach in \cite{BLG}, the topological setting where we can prove  a scaling limit for the current field given in \eqref{orleans}.   See \cite{KL} chapter 11 or \cite{BLG} for more details. Consider the lattice $ \mathbb{Z}^d $  endowed with the lexicographical order, consider $z\in \mathbb Z^d$ and let $ h_0=1 $, $ h_z(u)=\sqrt{2}\cos(2\pi z\cdot u) $ if $ z>0 $ and $ h_z(u)=\sqrt{2}\sin(2\pi z\cdot u) $ if $ z<0 $. In the space of real $ L^2(\Lambda) $ functions  equipped with the scalar product $ \langle f,g \rangle=\int_{\Lambda}du f(u) g(u) $,  the set $ \{h_z,\,\,z\in \mathbb{Z}^d\} $ is an orthonormal basis. Therefore given an $ L^2 $-integrable vector field $ G:\Lambda\rightarrow \mathbb{R}^d $ each component $ j\in\{1,\dots,d\} $ can be written as
\begin{equation*}
G_j=\underset{z\in \mathbb{Z}^d}{\sum} \langle G_j,h_z \rangle h_z=\underset{z\in \mathbb{Z}^d}{\sum} \mathcal{G}_j(z) h_z, \text{ with } \mathcal{G}_j(z):= \langle G_j,h_z \rangle.
\end{equation*}
 Given two $ L^2 $-vector fields $ F,G:\Lambda\rightarrow \mathbb{R}^d $, we  consider the scalar product
 \begin{equation}\label{eq:0scalarp}
 \langle F,G\rangle_0 :=\underset{j=1}{\overset{d}{\sum}}\langle F_j, G_j\rangle =  \underset{j=1}{\overset{d}{\sum}}\underset{z\in \mathbb{Z}^d}{\sum}\mathcal{F}_j(z)\mathcal{G}_j(z),
 \end{equation}
where $ \mathcal{F}_j(z)$ and $\mathcal{G}_j(z)  $ are the projections of $ F_j $ and $ G_j $ on $ h_z $. This scalar product defines the   $ L^2(\Lambda,\mathbb{R}^d) $ Hilbert space that we denote by $ \mathcal{H}^d_0 $. Consider on $ C^\infty(\Lambda, \mathbb R^d) $ the positive, symmetric linear operator $ \mathcal{L}=(1-\Delta) $. The functions $ h_z $ are its eigenvectors
\begin{equation*}
\mathcal{L}h_z= \gamma_z h_z, \text{ where \,} \gamma_z=1+4\pi^2 | z|^2.
\end{equation*}
This operator allows us to define for each $ k\geq 0 $ the Hilbert spaces $ \mathcal{H}^d_k $ obtained as the completion of $ C^\infty(\Lambda,\mathbb{R}^d) $ endowed with the scalar product $ \langle \cdot, \cdot\rangle_k $ defined by
\begin{equation}\label{eq:kscalarp}
\langle F , G\rangle_k:= \langle F, \mathcal{L}^k G\rangle_0\, ,
\end{equation}
with $F,G\in C^\infty(\Lambda,\mathbb{R}^d) $. From definition \eqref{eq:0scalarp} and properties of $\mathcal{L}  $ we have that
\begin{equation*}\label{eq:kscalar2}
\langle F , G\rangle_k= \underset{j=1}{\overset{d}{\sum}}\underset{z\in \mathbb{Z}^d}{\sum}\gamma_z^k \mathcal{F}_j(z)\mathcal{G}_j(z),
\end{equation*}
therefore for $k\geq k'\geq 0$ we have $ \mathcal{H}^d_{k} \subset\mathcal{H}^d_{k'}\subset\mathcal{H}^d_0$  because $ \mathcal{H}_k^d $ is the subspace of $ \mathcal{H}_0^d $ consisting of all vector fields $ F $ such that
\begin{equation}\label{eq:k-norm}
\|F\|^2_k= \underset{j=1}{\overset{d}{\sum}}\underset{z\in \mathbb{Z}^d}{\sum}\gamma_z^k\mathcal{F}^2_j(z)<\infty.
\end{equation}

Denote by $ \phi $ a bounded  linear functional from $ \mathcal{H}^d_k $ to $ \mathbb{R} $ belonging to the dual space $ \mathcal{H}^d_{-k}:=(\mathcal{H}^d_k)^* $,  its action on $ F\in \mathcal{H}^d_k $ is indicated with $ \phi(F) $. By Riesz representation theorem for each $ \phi\in \mathcal{H}^d_{-k} $ there is a unique $ G^\phi\in\mathcal{H}^d_{k} $ such that $\phi(F)=\langle G^\phi, F  \rangle_k $ for each $ F$ in $ \mathcal{H}^d_{k}$. From this it follows the existence of  an isometric isomorphism between $ \mathcal{H}^d_{k} $ and $ \mathcal{H}^d_{-k} $. Moreover, this isomorphism induces on $ \mathcal{H}^d_{-k} $ a scalar product $ \langle \cdot,\cdot\rangle_{-k} $, such that  given $ \phi,\phi'\in \mathcal{H}^d_{-k} $ we have $ \langle \phi,\phi'\rangle_{-k}=\langle G^\phi, G^{\phi'}\rangle_k$. This scalar product turns out to be
\begin{equation}\label{eq:scalarp-k}
\langle \phi,\phi'\rangle_{-k}=\underset{j=1}{\overset{d}{\sum}}\underset{z\in \mathbb{Z}^d}{\sum}\gamma^{-k}_z\phi(I^{j,z})\phi'(I^{j,z})=\underset{j=1}{\overset{d}{\sum}}\underset{z\in \mathbb{Z}^d}{\sum}\gamma^{k}_z \mathcal{G}_{j}^{\phi}(z)\mathcal{G}_{j}^{\phi'}(z)
\end{equation}
where  $ {I}^{j,z} $ is  the vector field such that its $ i$-th component is defined as $ I^{j,z}_i:=\delta_{i,j} h_z $ and $ \mathcal{G}_{j}^{\phi}=\langle G^\phi_j, h_z\rangle $. Therefore, the space $ \mathcal{H}^d_{-k} $ consists of all functionals such that
\begin{equation}\label{eq:-k-norm}
\|\phi\|^2_{-k}= \underset{j=1}{\overset{d}{\sum}}\underset{z\in \mathbb{Z}^d}{\sum} \gamma^{-k}_z \phi^2(I^{j,z})<\infty.
\end{equation}
Note that the space $ \mathcal{H}^d_{-k} $ can be obtained as the completion of $ \mathcal{H}^d_0 $ with respect to the scalar product $ \langle \cdot ,\cdot\rangle_{-k} $.

We will consider the  current field  $\mathcal J^N_t $ as an element of the Sobolev space $ \mathcal{H}_{-k^*}^d $, where $ k^* $  will be  determined later on, i.e. $\mathcal J^N_t \in \mathcal{H}_{-k^*}^d $. Therefore a trajectory $\left(\mathcal J^N_t\right)_{t\in[0,T]} $ will be considered to belong to the space of c\`adl\`ag trajectories $ \mathcal{D}\left([0,T],\mathcal{H}_{-k^*}^d\right) $.
Let $\mathcal J^N$ be the map from $\mathcal{D}([0,T],\Sigma_N) $ to $ \mathcal{D}\left([0,T],\mathcal{H}_{-k^*}^d\right) $ that associates to $(\eta_t)_{t\in [0,T]}$ the path $(\mathcal J^N_t)_{t\in [0,T]}$ .
We denote by $ \mathbb{P}_N $ the probability measure on $ \mathcal{D}([0,T],\mathcal{H}_{-k^*}) $ induced by $ \mathcal J^N $ and the measure $ \mu_N$, that is $ \mathbb{P}_N:= \mathbb{P}_{\mu_N}\circ (\mathcal J^N)^{-1 }$ and by $ \mathbb{E}_N $ the expectation with respect to $ \mathbb{P}_N $. With some abuse of notation we will denote also by $\mathcal J^N=\left(\mathcal J^N_t\right)_{t\in[0,T]}$
a trajectory of the current field and by $\mathcal J=\left(\mathcal J\right)_{t\in[0,T]}$ a generic element of $ \mathcal{D}\left([0,T],\mathcal{H}_{-k^*}^d\right) $.

\subsection{Hydrodynamics}
We start proving the diffusive hydrodynamic scaling behaviour of the density of our model. We have that the associated hydrodynamic equation is simply the heat equation and this is a consequence of Remark \ref{re:divj^g}, since the equation  \eqref{cont} is closed in terms of the  empirical density.  For
the law $\mathbb P_{\mu_N}$ on $D([0,T],\Sigma_N)$ we have the following result.

\begin{theorem}\label{th:hdpi}
Let $\eta_t$ be the Markov process with generator given by \eqref{generatore} with rates given in \eqref{ratej} multiplied by a factor of $N^2$. Suppose to start the process from a sequence $\mu_N$ of probability measures which are associated (according to \eqref{associato}) to a measurable density profile $\rho^*:\Lambda\to[0,1]$. Then, for any $ f\in C(\Lambda)$  and any $\varepsilon>0$, it holds
\begin{equation}\label{associatot}
\lim_{N\to +\infty}\mathbb P_{\mu_N}\left(\eta_\cdot\in\mathcal{D}([0,T],\Sigma_N):\left|\int_{\Lambda} f(u)\,\pi_N(\eta_t,du)-\int_{\Lambda}f(u)\rho_t(u)du\right|>\epsilon\right)=0\,,
\end{equation}
where $\rho_t(u)$ is the unique weak solution of the Cauchy problem
\begin{equation}\label{caud}
\left\{
\begin{array}{l}
\partial_t\rho_t(u)=\Delta\rho_t(u)\,,\forall u\in\Lambda, \forall  t>0,\\
\rho_0(u)=\rho^*(u)\,, \forall u\in\Lambda.
\end{array}
\right.
\end{equation}
\end{theorem}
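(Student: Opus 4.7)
The plan is to follow the standard martingale approach for gradient systems, exploiting Remark \ref{re:divj^g}: the circulation part $j^\delta_\eta$ of the instantaneous current is divergence-free and therefore irrelevant for the evolution of the density. Since the gradient part here is $j^\nabla_\eta(x,y)=\eta(x)-\eta(y)$ (as $h(\eta)=-\eta(0)$), which is linear in $\eta$, no replacement lemma, one-block or two-block estimate will be needed to close the equation in the limit.

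First I would test the microscopic continuity equation \eqref{cont} against a smooth $f\in C^\infty(\Lambda)$, obtaining $\pi^N_t(f)-\pi^N_0(f)=\mathcal J^N_t(\nabla f)$. Applying \eqref{eq:martc2} with $G=\nabla f$ in $d=2$ yields
\begin{equation*}
\pi^N_t(f)-\pi^N_0(f) \;=\; M^N_{\nabla f}(t) + \int_0^t \frac{1}{N^2}\sum_{x\in\Lambda_N}\eta_s(x)\,N^2\nabla\cdot(\nabla f)_N(x)\,ds + R^N_t(f),
\end{equation*}
where $R^N_t(f)$ collects the circulation contribution. Because $\nabla f$ is a gradient field, $(\nabla f)_N(x,y)=f(y)-f(x)$ and the sum around any elementary face is a closed telescope; hence $R^N_t(f)\equiv 0$. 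From \eqref{eq:aproxnablas}, $N^2\nabla\cdot(\nabla f)_N(x)=\Delta f(x)+O(N^{-2})$, so the drift term equals $\int_0^t\pi^N_s(\Delta f)\,ds$ up to an error of order $N^{-2}$. The quadratic-variation bound \eqref{eq:qmart} gives $\mathbb E_{\mu_N}[(M^N_{\nabla f}(t))^2]\le CT|\nabla f|_\infty^2/N^2\to 0$. Consequently, for every $\varepsilon>0$,
\begin{equation*}
\lim_{N\to\infty}\mathbb P_{\mu_N}\Big(\sup_{t\in[0,T]}\big|\pi^N_t(f)-\pi^N_0(f)-\int_0^t\pi^N_s(\Delta f)\,ds\big|>\varepsilon\Big)=0.
\end{equation*}

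Next I would establish tightness of $\mathbb Q_N$ on $\mathcal D([0,T],\mathcal M^+(\Lambda))$. Since $\mathcal M^+(\Lambda)$ is compact in the weak topology (total masses are bounded by $1$), by a Mitoma-type argument it suffices to verify tightness of the real-valued processes $(\pi^N_t(f))_{t\in[0,T]}$ for $f$ ranging in a countable dense subset of $C^\infty(\Lambda)$. The decomposition above combined with Aldous's criterion provides this: the martingale term vanishes in $L^2$ uniformly in $t$, while the time-integral term is Lipschitz in $t$ with constant $|\Delta f|_\infty$, uniformly in $N$, because $\pi^N_s$ has mass at most $1$.

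Any limit point $\pi$ is then supported on continuous paths (the microscopic jumps of $\pi^N$ are of order $N^{-2}$) and, by the previous display, satisfies the weak formulation
\begin{equation*}
\pi_t(f)=\pi_0(f)+\int_0^t\pi_s(\Delta f)\,ds,\qquad \forall\,f\in C^\infty(\Lambda),
\end{equation*}
with $\pi_0(du)=\rho^*(u)\,du$ by hypothesis \eqref{associato}. The exclusion constraint $\eta(x)\in\{0,1\}$ forces $\pi_t(du)=\rho_t(u)\,du$ with $\rho_t$ taking values in $[0,1]$ almost everywhere; uniqueness of bounded weak solutions of \eqref{caud} on the torus then identifies $\rho_t$ with the unique solution of the Cauchy problem, yielding deterministic convergence and hence \eqref{associatot}. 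I expect the main technical obstacle to be the tightness argument together with the verification that limit points are absolutely continuous with density bounded by $1$; the dynamical identification of the limit is essentially immediate thanks to the linearity of $h$ in $\eta$ and the closed-loop cancellation of the circulation term against a gradient test field.
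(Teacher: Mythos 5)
Your proposal is correct and follows essentially the same route as the paper: the proof there also reduces to the simple exclusion structure by noting (Remark \ref{re:divj^g}) that the circulation part of the current does not contribute when testing against gradient fields, derives the martingale decomposition \eqref{euno} with the quadratic variation bound \eqref{edue}, and then invokes the standard SEP argument of Chapter 4 of \cite{KL}. The tightness, identification of limit points, absolute continuity and uniqueness steps you spell out are exactly the ones the paper delegates to that reference.
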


\begin{proof}
Even if the model is more complex, the scaling behavior for the density can be proved similarly to the simple exclusion process (SEP). This is because a part of the instantaneous current is exactly divergence free (recall  Remark \ref{re:divj^g}) and does not contribute. We show how to reduce to the same structure of the SEP and then the proof is the same as in Chapter 4 of \cite{KL}.  	
From Dynkin's formula,  namely Lemma A.1.5.1 of \cite{KL}, for $f\in C(\Lambda)$,  we have that
\begin{equation}\label{eq:M^2densità}
M^N_{\nabla f}(t)=\int_\Lambda \pi_N(\eta_t,du)f-\int_\Lambda \pi_N(\eta_0, du)f-N^{2}\int_0^t\mathcal L_N\left(\int_\Lambda \pi_N(\eta_s, du) f\right)ds
\end{equation}
is a martingale with respect to the natural filtration (we used the notation $M^N_{\nabla f}(t)$ since for $f\in C^1(\Lambda)$ the martingale \eqref{eq:M^2densità} coincides with \eqref{martc} with $G=\nabla f$). Using Remark \ref{re:divj^g} we have that $M^N_{\nabla f}(t)$ coincides with
\begin{equation}\label{euno}
\int_\Lambda \pi_N(\eta_t, du)f-\int_\Lambda \pi_N(\eta_0, du)f- \frac{N^2}{N^d}\int_0^t\sum_{x\in \Lambda_N}
\eta_s(x)\nabla \cdot (\nabla f)_N(x)\,ds\,,
\end{equation}
where $(\nabla f)_N$ is the gradient discrete vector field $(\nabla f)_N(x,y)=f(y)-f(x)$.
Again by Lemma A.1.5.1 of \cite{KL} we have
\begin{equation}\label{edue}
\mathbb E_{\mu_N}\left[(M^N_{\nabla f}(t))^2\right]=\mathbb E_{\mu_N}\left[N^{2-2d}\int^t_0 ds\sum_{(x,y)\in E_N}c_{x,y}(\eta_s)\left(f(y)-f(x)\right)^2\right]\leq \frac{C(f,\alpha) t}{N^2}
\end{equation}
 where $ C(f,\alpha) $ is a constant depending on $f\in C(\Lambda) $  and the parameter $ \alpha $ (the above formula coincides with \eqref{eq:qmart} when $G=\nabla f$ since in that case we have $G_N(x,y)=f(y)-f(x)$).
Once obtained equations \eqref{euno} and \eqref{edue} the proof is the same as the one for SEP in Chapter 4 of \cite{KL}.
\end{proof}

\begin{remark}\label{re:weaksol}
	We recall that the unique weak solution of the Cauchy problem \eqref{caud} is also a strong solution, see \cite{KL}. Therefore the measure $ \pi_t(du)=\rho_t(u)du $ is absolutely continuous with respect to the Lebesgue measure and its density is a $ C^{1,2}([0,T]\times\Lambda) $ function.
\end{remark}

\subsection{Typical current}
We have seen above that the hydrodynamic equation can be written as a conservation law
$\partial_t\rho+\nabla\cdot J(\rho)=0$ but the typical current $J(\rho)$ does not coincide with $-\nabla \rho$ as in the classic gradient model case.
The expression of the typical current is obtained by studying the limiting behaviour of the current field. To that end,
let us introduce, for $g$ defined in \eqref{defg},
\begin{equation}\label{antidiff}
a(\rho):=E_{\nu_\rho}\left[g(\eta)\right]=2\alpha\left[\rho(1-\rho)\right]^2\,,
\end{equation}
and the antisymmetric matrix
\begin{equation}\label{Amat}
A(\rho)=\left(
\begin{array}{cc} 0 & -a'(\rho) \\ a'(\rho) & 0 \end{array} \right)\,.
\end{equation}
We have the following theorem for the current field ${\mathcal J}^N $.

\begin{theorem} \label{limit of current}
Let $\eta_t$ be the Markov process with generator given by \eqref{generatore} with rates given in \eqref{ratej} multiplied by $N^2$. Suppose to start the process from a sequence  of probability measures $\mu_N$ which are associated (according to \eqref{associato}) to a measurable density profile $\rho^*:\Lambda\to[0,1]$.  Then,  for any $C^\infty$ vector field $G$ on $\Lambda$  and for any $\epsilon >0$, it holds
\begin{equation}\label{idroJ}
\lim_{N\to+\infty}\mathbb P_{\mu_N}\left(\eta_\cdot\in\mathcal{D}([0,T],\Sigma_N): \left| \mathcal J_t^N(G)-\int_{\Lambda}du \int_0^t ds\, J(\rho_s(u))\cdot G(u)\right|>\epsilon\right)=0\,, \forall t\in[0,T],
\end{equation}
where
\begin{equation}
J(\rho)=-\nabla\rho-A(\rho)\nabla\rho\,,
\end{equation}
and $\rho_t(u)$ is the unique weak  solution of the Cauchy problem \eqref{caud} and $A(\rho)$ is given in \eqref{Amat}.
\end{theorem}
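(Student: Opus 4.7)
The strategy is to exploit the Dynkin decomposition \eqref{eq:martc2}, which writes $\mathcal J_t^N(G)$ as the sum of a martingale $M_G^N(t)$ and a time-integrated contribution that splits into a divergence part (involving $\eta_s(x)\nabla\cdot G_N(x)$) and a circulation part (involving $\tau_{\mathfrak f}g(\eta_s)$ summed over the oriented edges of each anticlockwise face). We treat these three pieces separately. The martingale is negligible: the quadratic variation bound \eqref{eq:qmart} yields $\mathbb E_{\mu_N}[(M_G^N(t))^2]\leq C(G,\alpha)T/N^2$ since $d=2$, and Doob's $L^2$ maximal inequality then gives $\sup_{t\in[0,T]}|M_G^N(t)|\to 0$ in $L^2$, hence in probability.

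For the divergence part, the first approximation in \eqref{eq:aproxnablas} replaces $\nabla\cdot G_N(x)$ by $N^{-2}\nabla\cdot G(x)$ at the cost of a uniform $O(1/N^2)$ error. The resulting Riemann sum equals $\int_0^t ds\int_\Lambda \pi^N(\eta_s,du)\,\nabla\cdot G(u)$, which by Theorem \ref{th:hdpi} and bounded convergence converges in probability to $\int_0^t ds\int_\Lambda \rho_s(u)\,\nabla\cdot G(u)\,du$. Integration by parts on the torus, justified by the $C^{1,2}$ regularity of $\rho_s$ recorded in Remark \ref{re:weaksol}, rewrites this limit as $-\int_0^t ds\int_\Lambda \nabla\rho_s(u)\cdot G(u)\,du$.

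For the circulation part the second approximation in \eqref{eq:aproxnablas} reduces the analysis to the quantity $\frac{1}{N^2}\int_0^t ds\sum_{\mathfrak f}\tau_{\mathfrak f}g(\eta_s)\,\nabla^\perp\cdot G(z_{\mathfrak f})$, where $z_{\mathfrak f}$ is the centre of $\mathfrak f$. The key step, and the principal obstacle of the proof, is a replacement lemma substituting the local function $\tau_{\mathfrak f}g(\eta_s)$ by the equilibrium mean $a(\rho_s(z_{\mathfrak f})) = E_{\nu_{\rho_s(z_{\mathfrak f})}}[g]$ defined in \eqref{antidiff}. This is carried out by the classical two-step procedure of \cite{KL} Chapter 5: a one-block estimate first replaces $\tau_{\mathfrak f}g(\eta_s)$ by its microscopic spatial average over a box of side $\ell$ around $z_{\mathfrak f}$, and a law-of-large-numbers argument under the product measure $\nu_\rho$ identifies this average with $a(\rho_s^\ell(z_{\mathfrak f}))$; the two-block estimate combined with Theorem \ref{th:hdpi} then allows to let $\ell\to\infty$ and replace $\rho_s^\ell(z_{\mathfrak f})$ by $\rho_s(z_{\mathfrak f})$. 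Because the Bernoulli product measures are invariant and the rates are uniformly bounded, the entropy method and the Dirichlet form estimates for the symmetric part of the generator apply in the standard way; non-reversibility is irrelevant at this level, since it affects only the antisymmetric part of the generator, which drops out of the Cauchy--Schwarz bounds involved.

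After these three reductions, a final integration by parts on the torus transforms the limit of the circulation contribution into $-\int_0^t ds\int_\Lambda \nabla^\perp a(\rho_s(u))\cdot G(u)\,du$. Adding this to the divergence contribution and using the identity $A(\rho)\nabla\rho = \nabla^\perp a(\rho)$, which follows at once from \eqref{Amat}, delivers $\mathcal J_t^N(G)\to \int_0^t ds\int_\Lambda J(\rho_s(u))\cdot G(u)\,du$ in probability, with $J(\rho)=-\nabla\rho-A(\rho)\nabla\rho$, as stated.
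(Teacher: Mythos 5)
Your proposal is correct and follows the same skeleton as the paper's proof — the decomposition \eqref{eq:martc2} into martingale, divergence and circulation parts, the martingale killed via \eqref{eq:qmart} and Doob, the divergence part closed by Theorem \ref{th:hdpi}, and a replacement lemma for the circulation part — but it executes the two technical steps by a genuinely different route. First, you bypass tightness altogether: the paper proves tightness of the current field in $\mathcal{D}([0,T],\mathcal{H}^{d}_{-k})$, $k>k^*=2$, and then characterizes the limit points (Proposition \ref{pr:char}) via Portmanteau's theorem; since the theorem as stated only asks for convergence in probability of $\mathcal J^N_t(G)$ at fixed $t$ and fixed smooth $G$, your direct $L^2$/$L^1$ estimates (Doob for the martingale, bounded convergence for the time integrals) do suffice, while the paper's heavier route buys convergence of the whole current trajectory in the Sobolev topology of Section \ref{sec:currentop}, which is the stronger statement the authors are after. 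Second, for the replacement step you invoke the classical one-block/two-block scheme of \cite{KL} to pass from $\tau_{\mathfrak f}g(\eta_s)$ to $a(\rho_s)$, whereas the paper proves a tailored lemma (Lemma \ref{le:repla}, Proposition \ref{pr:repla}) that replaces each of the four occupation variables entering $g$ directly by the empirical density of one of the four disjoint macroscopic corner boxes $B^{\varepsilon N}_{p,q}(x)$, using only the entropy inequality, Feynman--Kac \eqref{eq:feynman} and a telescoping/Young bound against the Dirichlet form, and then identifies these box averages with $\rho_s$ through Theorem \ref{th:hdpi} and Lebesgue differentiation; this avoids local ergodicity and equivalence of ensembles entirely. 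Your route also works, provided you make explicit the two points the paper does: in the Feynman--Kac variational bound only $\langle \mathcal L_N\sqrt f,\sqrt f\rangle_{\nu_\rho}$ enters, which by invariance (not reversibility) of $\nu_\rho$ equals minus the Carr\'e du Champ — this is your (correct) remark that non-reversibility is harmless — and the rates satisfy $c_{x,y}(\eta)\geq(1-|\alpha|)\eta(x)\big(1-\eta(y)\big)$ since $|\alpha|<1$, so the Dirichlet form dominates the SEP one and the usual one-block ergodicity argument applies; also, strictly speaking the one-block step identifies the block average of $\tau_x g$ with a canonical expectation via equivalence of ensembles rather than a plain law of large numbers under $\nu_\rho$, but that is a presentational looseness, not a gap.
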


The proof of last theorem articulates in two main steps, that is, the proof of tightness for the sequence $ \{\mathbb{P}_N, N\geq 1\} $ and the characterization of its limits point. Therefore we are going to perform these two steps separately and at the end we deduce Theorem \ref{limit of current}.

\section{Proof of Theorem \ref{limit of current}}\label{PC}
As we mentioned above, the proof of the theorem relies on two main steps: tightness and the characterization of limit points. We start with the former.
\subsection{Tightness}

Let us introduce the uniform modulus of  continuity $ w_\delta({\mathcal J}) $ on $ \mathcal{D}\left([0,T], \mathcal{H}_{-k}^d\right) $ defined by
\begin{equation}\label{eq:moduluscon}
w_\delta({\mathcal J})=\underset{\substack{|t-s|\leq\delta\\0\leq s,t \leq T}}{\sup}\| {\mathcal J}_t-{\mathcal J}_s\|_{-k},
\end{equation}
with $\|\cdot\|_{-k}$ as defined in \eqref{eq:-k-norm}.

By Prokhorov's theorem, we have the following criterion for relative compactness of a sequence of probability measures $\{ \mathbb{P}_N, N\geq1 \} $ on $ \mathcal{D}\left([0,T],\mathcal{H}_{-k}^d\right)$ (see \cite{KL}  Chapter 4 Theorem 1.3 and Remark 1.4).
\begin{proposition}\label{th:funcPro}
	A sequence  of probability measures $ \{ \mathbb{P}_N,N\geq 1 \} $ defined on  $ \mathcal{D}\left([0,T],\mathcal{H}_{-k}^d\right) $ is tight if, for every $ 0\leq t\leq T $ and for every $\varepsilon>0$, we have
	\begin{enumerate}
		\item $ \underset{l\rightarrow \infty}{\lim}\underset{N\rightarrow \infty}{\limsup}\,\,\mathbb{P}_N \left( \underset{0\leq t \leq  T}{\sup}	\,\,\|{\mathcal J}_t\|_{-k}>l  \right) =0$;
		
		\item $\underset{\delta \rightarrow 0}{\lim} \,\,\,\underset{N\rightarrow \infty}{\limsup} \,\,\mathbb{P}_N\left(w_\delta ({\mathcal J})> \varepsilon \right)=0	\qquad \forall \varepsilon >0 $.
	\end{enumerate}
\end{proposition}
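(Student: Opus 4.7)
The plan is to derive the criterion from Prokhorov's theorem together with the Arzelà--Ascoli characterization of compact subsets of the Skorokhod space $\mathcal{D}([0,T], E)$ for a Polish space $E$: a set is relatively compact iff its range is relatively compact in $E$ and an appropriate modulus of continuity is uniformly small. Concretely, for every $\varepsilon > 0$ I must exhibit a compact set $K_\varepsilon \subset \mathcal{D}([0,T], \mathcal{H}_{-k}^d)$ with $\mathbb{P}_N(K_\varepsilon) \ge 1 - \varepsilon$ uniformly in $N$, and then invoke Prokhorov's theorem.

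The key structural ingredient is a compact embedding within the Sobolev scale $\{\mathcal{H}_{-k}^d\}_k$. From the spectral expression \eqref{eq:-k-norm}, if $k'' > k$ and $\|\phi\|_{-k} \le l$, then for every $L \ge 1$
\begin{equation*}
\sum_{j=1}^{d}\sum_{|z|>L} \gamma_z^{-k''}\, \phi(I^{j,z})^2 \;\le\; \gamma_L^{-(k''-k)}\, l^{2},
\end{equation*}
which vanishes as $L \to \infty$. Thus balls that are bounded in $\|\cdot\|_{-k}$ have uniformly decaying high-frequency tails in $\|\cdot\|_{-k''}$; combined with boundedness and finite-dimensional approximation, this shows that the inclusion $\mathcal{H}_{-k}^d \hookrightarrow \mathcal{H}_{-k''}^d$ is compact.

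Building $K_\varepsilon$ is then routine. Condition (1) supplies $l$ with $\mathbb{P}_N\bigl(\sup_{0\le t\le T}\|\mathcal J_t\|_{-k} > l\bigr) \le \varepsilon/2$ for all $N$; condition (2) supplies, for each integer $m \ge 1$, a $\delta_m > 0$ with $\mathbb{P}_N\bigl(w_{\delta_m}(\mathcal J) > 1/m\bigr) \le \varepsilon\, 2^{-m-1}$. The intersection of the corresponding events has $\mathbb{P}_N$-probability at least $1 - \varepsilon$, and the associated set of trajectories --- those remaining in the $l$-ball of $\mathcal{H}_{-k}^d$ with a uniformly controlled modulus of continuity in $\|\cdot\|_{-k}$ --- satisfies the Arzelà--Ascoli hypotheses: the range is precompact by the embedding of the previous paragraph, and the uniform $w_{\delta_m}$ controls (which dominate the corresponding moduli in any weaker Sobolev norm) provide the equicontinuity. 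Closing this set gives the desired compact $K_\varepsilon$.

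The main delicate point is that \emph{bounded balls in the infinite-dimensional Hilbert space $\mathcal{H}_{-k}^d$ are not compact in their own norm}, so condition (1) does not by itself provide tightness at the very same Sobolev index. One therefore has to work through the compact embedding into $\mathcal{H}_{-k''}^d$ and then return to $\mathcal{H}_{-k}^d$ by a Mitoma-type argument: testing against the countable total family $\{I^{j,z}\}$, each real-valued process $\mathcal J^N_\cdot(I^{j,z})$ is tight in $\mathcal{D}([0,T], \mathbb{R})$ thanks to conditions (1) and (2), while weak lower semicontinuity of $\|\cdot\|_{-k}$ ensures that any subsequential limit lies in $\mathcal{D}([0,T], \mathcal{H}_{-k}^d)$ with $\sup_t \|\mathcal J_t\|_{-k} \le l$. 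This bookkeeping between the two Sobolev indices is the bulk of the technical effort; the remaining ingredients are standard results for Skorokhod spaces over Polish targets.
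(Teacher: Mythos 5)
Your overall route (Prokhorov's theorem plus an Arzel\`a--Ascoli-type characterization of compacts in the Skorokhod space, with spatial compactness supplied by the embedding between Sobolev indices) is the natural one; note, for comparison, that the paper does not prove this criterion at all: it invokes \cite{KL}, Chapter 4, Theorem 1.3 and Remark 1.4, and the proof environment attached to the proposition merely verifies items (1) and (2) for the current field via Lemma \ref{vadoascoli}, Markov's inequality and Proposition \ref{mancalabel}.

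There is, however, a genuine gap exactly at the point you flag as delicate. The set of trajectories you extract from conditions (1)--(2) has range contained in an $\|\cdot\|_{-k}$-ball, and such a ball is precompact only in $\mathcal{H}^d_{-k''}$ with $k''>k$; hence the $K_\varepsilon$ you construct is compact in $\mathcal{D}\left([0,T],\mathcal{H}^d_{-k''}\right)$, not in $\mathcal{D}\left([0,T],\mathcal{H}^d_{-k}\right)$, so (1)--(2) at index $k$ give tightness only at a strictly weaker index. The proposed repair --- a ``Mitoma-type'' argument testing against $\{I^{j,z}\}$ together with weak lower semicontinuity of $\|\cdot\|_{-k}$ --- cannot restore tightness at the same index: it only shows that subsequential limits obtained in the weaker topology are supported on $\mathcal{H}^d_{-k}$-valued paths with $\sup_t\|\mathcal J_t\|_{-k}\leq l$, which is weaker than tightness in $\mathcal{D}\left([0,T],\mathcal{H}^d_{-k}\right)$; Mitoma's theorem uses nuclearity and fails for a fixed Hilbert space. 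Concretely, if $(\phi_N)$ is an orthonormal sequence in $\mathcal{H}^d_{-k}$ and $\mathbb{P}_N$ is the Dirac mass on the constant path $\phi_N$, then (1) and (2) hold and each real-valued process $\mathcal J^N_\cdot(I^{j,z})$ is tight, yet $\{\mathbb{P}_N\}$ is not tight in $\mathcal{D}\left([0,T],\mathcal{H}^d_{-k}\right)$, since the time-zero marginals charge a set with no norm-convergent subsequence. The fix is either to state the conclusion in $\mathcal{H}^d_{-k''}$ for $k''>k$, or to use --- as the application in the paper implicitly allows --- that the bounds of Lemma \ref{vadoascoli} and Proposition \ref{mancalabel} hold for every $k>k^*=2$, so that for a given $k>k^*$ one verifies (1)--(2) at an intermediate index $k'\in(k^*,k)$, where the ball of $\mathcal{H}^d_{-k'}$ is genuinely compact in $\mathcal{H}^d_{-k}$.
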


To prove Proposition \ref{th:funcPro} for the sequence $ \{\mathbb{P}_N , N\geq 1\}$ induced by the integrated current field,  we first derive the next lemma.

\begin{lemma}\label{le:estimate J(Ii)}
	 For every $ z $ in $ \mathbb{Z}^2$ and $ j=1,2 $,
	\begin{equation}\label{eq:lemmaJ(h)}
	\underset{N\rightarrow \infty}{\limsup}\,\,\mathbb{E}_N\left[ \underset{0\leq t\leq T}{\sup} [{\mathcal J}_t(I^{j,z}) ]^2  \right]\leq  16\pi^2 T^2\big(|z_1|+|z_2|\big)^2,
	\end{equation}
	where $ I^{j,z} $ was introduced  below \eqref{eq:scalarp-k}.
\end{lemma}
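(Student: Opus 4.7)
The plan is to use the martingale decomposition \eqref{eq:martc2} with test field $G=I^{j,z}$ to write $\mathcal J^N_t(I^{j,z})$ as a martingale plus a drift, and then control each piece separately. Setting
\[
\mathcal D^N_t := \int_0^t ds\,\Bigl(\sum_{x\in\Lambda_N}\eta_s(x)\,\nabla\cdot I^{j,z}_N(x) + \sum_{\mathfrak f\in\mathcal F_N}\tau_{\mathfrak f}g(\eta_s)\sum_{(x,y)\in f^\circlearrowleft}I^{j,z}_N(x,y)\Bigr),
\]
we have $\mathcal J^N_t(I^{j,z}) = M^N_{I^{j,z}}(t) + \mathcal D^N_t$, so combining $(a+b)^2\le 2a^2+2b^2$ with Doob's maximal inequality gives
\[
\mathbb E_N\Bigl[\sup_{0\le t\le T}[\mathcal J^N_t(I^{j,z})]^2\Bigr]\le 8\,\mathbb E_{\mu_N}\bigl[(M^N_{I^{j,z}}(T))^2\bigr] + 2\sup_{0\le t\le T}(\mathcal D^N_t)^2.
\]

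For the martingale piece, the quadratic variation estimate \eqref{eq:qmart} with $|I^{j,z}|_\infty\le\sqrt{2}$ and $d=2$ yields $\mathbb E_{\mu_N}[(M^N_{I^{j,z}}(T))^2]\le 8T(1+|\alpha|)/N^2$, which vanishes as $N\to\infty$ and contributes nothing to the $\limsup$. For the drift piece, I would apply the Taylor expansions \eqref{eq:aproxnablas} with $G=I^{j,z}$, use the deterministic bounds $|\eta_s(x)|\le 1$ and $|\tau_{\mathfrak f}g(\eta_s)|\le|\alpha|\le 1$, and recognize the resulting spatial sums (with their $1/N^2$ prefactors) as Riemann sums bounded by sup-norms, obtaining
\[
|\mathcal D^N_t|\le t\bigl(|\nabla\cdot I^{j,z}|_\infty + |\alpha|\,|\nabla^\perp\cdot I^{j,z}|_\infty\bigr) + R_N(z),
\]
where $R_N(z)$ collects the $O(1/N^2)$ Taylor remainders from \eqref{eq:aproxnablas} and, for fixed $z$, is $o(1)$ as $N\to\infty$.

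To conclude, from $h_z(u)=\sqrt{2}\cos(2\pi z\cdot u)$ (or the sine analogue) a direct computation gives $|\nabla\cdot I^{j,z}|_\infty\le 2\pi\sqrt{2}\,|z_j|$ and $|\nabla^\perp\cdot I^{j,z}|_\infty\le 2\pi\sqrt{2}\,|z_{3-j}|$; using $|\alpha|\le 1$ to bundle the two contributions into $|z_1|+|z_2|$ yields $\sup_{0\le t\le T}|\mathcal D^N_t|\le 2\pi\sqrt{2}\,T(|z_1|+|z_2|)+o(1)$, so $\sup_t(\mathcal D^N_t)^2\le 8\pi^2 T^2(|z_1|+|z_2|)^2+o(1)$. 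Inserting this and the martingale bound into the inequality above and passing to $\limsup_N$ produces exactly the claimed $16\pi^2 T^2(|z_1|+|z_2|)^2$. The only mildly delicate point is the Taylor bookkeeping: the remainder in \eqref{eq:aproxnablas} grows like $(|z_1|+|z_2|)^3$ through the third derivatives of $I^{j,z}$, but since $z$ is fixed before the limit and the prefactor is $1/N^2$, this is harmless.
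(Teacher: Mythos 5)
Your proof is correct and follows essentially the same route as the paper: the decomposition of $\mathcal J^N_t(I^{j,z})$ via the martingale \eqref{eq:martc2}, the quadratic-variation bound \eqref{eq:qmart} with Doob's inequality for the vanishing martingale part, and the Taylor expansions \eqref{eq:aproxnablas} with $|\eta|\le 1$, $|g|\le|\alpha|$ for the drift. The only (harmless) difference is that you bound the drift pathwise by sup-norms of $\partial_{x_i}h_z$, whereas the paper uses Cauchy--Schwarz in time and lets the Riemann sums converge to $\int_\Lambda|\partial_{x_i}h_z|\,dx$; both yield the same constant $16\pi^2T^2(|z_1|+|z_2|)^2$.
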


\begin{proof}
Call ${M}^N_{j,z}$ the martingale in \eqref{martc} acting on the test vector field $ G= I^{j,z} $. Since $(a+b)^2\leq 2(a^2+b^2)$, to prove inequality \eqref{eq:lemmaJ(h)}, we have to properly bound the expectation of the $ \underset{ 0\leq t \leq T}{\sup} $ of the two terms in the decomposition of  $ {\mathcal J}_t^N(I^{j,z}) $ given in \eqref{martc}.  Since $|\partial_{x_i}I^{j,z}|_\infty=2\sqrt 2\pi |z|$, from  Remark \ref{re:qmart} and Doob's inequality,  we have
\begin{equation}\label{furgoncino}
\mathbb E_{\mu_N}\left[\underset{0\leq t\leq T}{\sup}({M}^N_{j,z})^2(t)\right]\leq \frac{16d\pi^2T|z|^2\big(1+|\alpha|\big)}{N^d}\,.
\end{equation}
It remains to estimate the following expectation
\begin{align}\label{eq:estimateJ(I)}
\mathbb{E}_{\mu_N} \left\{\underset{0\leq t\leq T}{\sup} \left[ N^{2-d}\int_0^t ds\left(\sum_{x\in \Lambda_N}\eta_s(x)\nabla\cdot I^{j,z}_N(x) +\sum_{\mathfrak f\in \mathcal F_N}\tau_{\mathfrak f}g(\eta_s)\sum_{(x,y)\in f^\circlearrowleft}I^{j,z}_N(x,y)\right)\right]^2 \right\}\leq\\
T\mathbb{E}_{\mu_N}  \left[\int_0^T ds\left(N^{2-d}\sum_{x\in \Lambda_N}\eta_s(x)\nabla\cdot I^{j,z}_N(x) + N^{2-d}\sum_{\mathfrak f\in \mathcal F_N}\tau_{\mathfrak f}g(\eta_s)\sum_{(x,y)\in f^\circlearrowleft}I^{j,z}_N(x,y)\right)^2\right], \nonumber
\end{align}
the second line comes from Cauchy-Schwarz inequality. Since $ \eta $ and $g(\eta) $ are bounded, using the approximations \eqref{eq:aproxnablas}, the second line is bounded up to an infinitesimal term by
\begin{equation}\label{key1}
T^2\left(N^{-d}\sum_{x\in \Lambda_N}|\nabla\cdot I^{j,z}(x)| + N^{-d}\sum_{\mathfrak f\in \mathcal F_N}|\nabla^{\perp}\cdot I^{j,z}(y(\mathfrak f))| \right)^2
\end{equation}
where $y(\mathfrak f)$ is the center of the face $\mathfrak f$. In the above formula we have Riemann sums and by the definition of $ I^{j,z} $ formula \eqref{key1} is converging when $N\to +\infty$ to
\begin{equation*}\label{key2}
T^2\left(\sum_{i=1,2} \int_\Lambda dx |\partial_{x_i} h^z(x)|\right)^2\leq 8\pi^2 T^2\big(|z_1|+|z_2|\big)^2\,.
\end{equation*}
\end{proof}

\begin{remark}\label{glauglau}
By the formulas \eqref{eq:aproxnablas}, that are obtained by suitable Taylor expansions, we have that
$N^2\nabla\cdot I^{j,z}_N(x)=\nabla\cdot I^{j,z}(u)$ for a suitable $u$ on the face of the dual lattice centered at $x$ and $N^2\sum_{(x,y)\in f^\circlearrowleft}I^{j,z}_N(x,y)=\nabla^{\perp}\cdot I^{j,z}(u)$
for a suitable $u \in \mathfrak f$. We have therefore that the right-hand side of \eqref{eq:estimateJ(I)}
is bounded uniformly in $N, j$ by $CT^2(|z_1|+|z_2|)^2$ for a suitable constant $C$.
\end{remark}

\begin{lemma}\label{vadoascoli}
	For $ k> k^*=\frac{d}{2}+1=2 $, we have that
	\begin{equation}\label{eq:boundk*}
		\underset{N\rightarrow \infty}{\limsup}\,\,\mathbb{E}_{\mu_N}\left[ \underset{0\leq t\leq T}{\sup} \|\mathcal J^N_t\|_{-k} ^2 \right]<\infty.
	\end{equation}
\end{lemma}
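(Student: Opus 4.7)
The plan is to expand $\|\mathcal{J}^N_t\|_{-k}^2$ using the explicit formula \eqref{eq:-k-norm}, namely
\begin{equation*}
\|\mathcal{J}^N_t\|_{-k}^2 = \sum_{j=1}^{d}\sum_{z\in\mathbb{Z}^d}\gamma_z^{-k}\bigl[\mathcal{J}^N_t(I^{j,z})\bigr]^2,
\end{equation*}
and then exploit the fact that each summand is nonnegative to push the supremum in $t$ inside the sum. This yields
\begin{equation*}
\sup_{0\le t\le T}\|\mathcal{J}^N_t\|_{-k}^2 \le \sum_{j=1}^{d}\sum_{z\in\mathbb{Z}^d}\gamma_z^{-k}\sup_{0\le t\le T}\bigl[\mathcal{J}^N_t(I^{j,z})\bigr]^2.
\end{equation*}
Taking expectation and using Tonelli (all terms being nonnegative), it reduces to estimating each of the coefficients $\mathbb{E}_{\mu_N}\bigl[\sup_{t\le T}(\mathcal{J}^N_t(I^{j,z}))^2\bigr]$ and then summing against the weights $\gamma_z^{-k}$.

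The per-mode bound is exactly Lemma \ref{le:estimate J(Ii)}, which gives $\mathbb{E}_{\mu_N}[\sup_{t\le T}(\mathcal{J}^N_t(I^{j,z}))^2]\le 16\pi^2 T^2(|z_1|+|z_2|)^2 + o(1)$ as $N\to\infty$. Using $(|z_1|+|z_2|)^2\le 2|z|^2$, one obtains a uniform-in-$N$ bound of the form $C(T)(1+|z|^2)$ for every $z$ and every $j\in\{1,\dots,d\}$. (Strictly speaking, to get a clean uniform bound one first fixes $N$ large enough so that the $o(1)$ error in the proof of Lemma \ref{le:estimate J(Ii)} is $\le 1$, uniformly in $z$; the Remark \ref{glauglau} already indicates that the Riemann-sum errors are controlled mode-by-mode in this way.)

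Combining the two ingredients leads to
\begin{equation*}
\limsup_{N\to\infty}\mathbb{E}_{\mu_N}\!\left[\sup_{0\le t\le T}\|\mathcal{J}^N_t\|_{-k}^2\right]\le C(T)\sum_{j=1}^{d}\sum_{z\in\mathbb{Z}^d}\frac{1+|z|^2}{(1+4\pi^2|z|^2)^{k}}.
\end{equation*}
Since $\gamma_z^{-k}\sim |z|^{-2k}$ at infinity, the series behaves like $\sum_{z\in\mathbb{Z}^d}|z|^{2-2k}$, which converges precisely when $2k-2>d$, i.e.\ $k>d/2+1=k^*$. Therefore the right-hand side is finite, establishing \eqref{eq:boundk*}.

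The only subtle step is the interchange of $\sup_t$ and the infinite series, but this is justified for free by the nonnegativity of $(\mathcal{J}^N_t(I^{j,z}))^2$; the rest is bookkeeping. The "main obstacle" is therefore not analytic but rather calibrational: one has to check that the Sobolev exponent $k^*=d/2+1$ is sharp for the quadratic bound $(|z_1|+|z_2|)^2$ of Lemma \ref{le:estimate J(Ii)}, which is the reason for choosing $k^*=2$ in dimension $d=2$.
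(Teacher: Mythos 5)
Your proposal is correct and follows essentially the same route as the paper: expand the $\mathcal{H}^d_{-k}$ norm mode by mode, push the supremum in $t$ inside the nonnegative series, invoke Lemma \ref{le:estimate J(Ii)} for the per-mode bound and Remark \ref{glauglau} for the uniform-in-$N$ domination that justifies passing the $\limsup_N$ through the sum, and conclude by summability of $\gamma_z^{-k}|z|^2$ for $k>d/2+1$. The paper phrases the last interchange as dominated convergence with the sharp constant $16\pi^2T^2(|z_1|+|z_2|)^2$, but your uniform bound $C(T)(1+|z|^2)$ yields the same finiteness conclusion.
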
	
	\begin{proof}
		The expectation in \eqref{eq:boundk*} is bounded from above by
		\begin{equation*}
		\underset{j=1}{\overset{d}{\sum}}\,\underset{z\in \mathbb{Z}^d}{\sum} \,\gamma^{-k}_z \mathbb{E}_{\mu_N}\left[\underset{0\leq t\leq T}{\sup}\left[\mathcal J_t^N(I^{j,z})\right]^2\right]\,.
		\end{equation*}
Consider $k>k^*$ and use the bound  \eqref{eq:lemmaJ(h)}. By Remark \ref{glauglau} we can apply the dominated convergence theorem and we have
\begin{equation*}
\underset{N\rightarrow \infty}{\limsup}\,\, \underset{j=1}{\overset{d}{\sum}}\,\underset{z\in \mathbb{Z}^d}{\sum} \,\gamma^{-k}_z \mathbb{E}_{\mu_N}\left[\underset{0\leq t\leq T}{\sup}\left[{\mathcal J}_t^N(I^{j,z})\right]^2\right]\leq 16\pi^2 T^2 \underset{z\in \mathbb{Z}^d}{\sum}\frac{(|z_1|+|z_2|)^2}{\gamma^k_z}<+\infty\,.
\end{equation*}
\end{proof}
Using \ref{eq:boundk*} and Markov's inequality we obtain for $k>2$ the first condition of Proposition \ref{th:funcPro}.
\begin{proposition}\label{mancalabel}
For each $k> k^*=d/2+1=2 $ and each $ \epsilon>0 $
\begin{equation}\label{eq:modcon}
\underset{\delta \rightarrow 0}{\lim}\,\,\underset{N\rightarrow \infty}{\limsup}\,\,\mathbb{P}_{\mu_N}\left(\underset{\substack{|t-s|\leq\delta\\0\leq s,t \leq T}}{\sup}  \| {\mathcal J}_t^N-{\mathcal J }_s^N  \|_{-k}>\epsilon \right)=0.
\end{equation}
\end{proposition}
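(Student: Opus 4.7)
The plan is to apply Markov's inequality to reduce \eqref{eq:modcon} to a uniform-in-$N$ second moment estimate
\[
\mathbb{E}_{\mu_N}\!\left[\sup_{|t-s|\le\delta,\,0\le s,t\le T}\|\mathcal{J}_t^N-\mathcal{J}_s^N\|_{-k}^2\right]=o_\delta(1),
\]
and to obtain this bound componentwise in the Fourier basis. For each pair $(j,z)\in\{1,\dots,d\}\times\mathbb{Z}^d$ I would decompose $\mathcal{J}_t^N(I^{j,z}) = M_{j,z}^N(t) + B_{j,z}^N(t)$, where $M_{j,z}^N$ is the martingale in \eqref{eq:martc2} evaluated at the test field $G=I^{j,z}$ and $B_{j,z}^N$ is the remaining (Lebesgue integral) drift. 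Expanding $\|\cdot\|_{-k}^2=\sum_{j,z}\gamma_z^{-k}(\cdot)^2$ via \eqref{eq:-k-norm}, commuting the supremum past the infinite series using nonnegativity, and then pulling the series outside the expectation by Tonelli, the problem reduces to estimating, uniformly in $N$ and $j$, the quantities $\mathbb{E}_{\mu_N}[\sup_{|t-s|\le\delta}(M_{j,z}^N(t)-M_{j,z}^N(s))^2]$ and the deterministic $\sup_{|t-s|\le\delta}(B_{j,z}^N(t)-B_{j,z}^N(s))^2$.

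For the drift I would use the a priori bounds $0\le\eta(x)\le 1$ and $|g(\eta)|\le|\alpha|$ together with the Taylor estimates \eqref{eq:aproxnablas} (exactly as in Lemma \ref{le:estimate J(Ii)}) to obtain the deterministic, uniform-in-$N$ Lipschitz estimate
\[
|B_{j,z}^N(t)-B_{j,z}^N(s)|\le C_1(t-s)(|z_1|+|z_2|).
\]
For the martingale increment I would use the elementary inequality $\sup_{|t-s|\le\delta}(M_{j,z}^N(t)-M_{j,z}^N(s))^2\le 4\sup_{0\le t\le T}(M_{j,z}^N(t))^2$ together with Doob's $L^2$ inequality and the variance bound \eqref{furgoncino} to conclude
\[
\mathbb{E}_{\mu_N}\!\left[\sup_{|t-s|\le\delta}(M_{j,z}^N(t)-M_{j,z}^N(s))^2\right]\le \frac{C_2|z|^2}{N^d}.
\]

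Assembling these two contributions via $(a+b)^2\le 2a^2+2b^2$ and summing over $(j,z)$ against the weights $\gamma_z^{-k}$ yields
\[
\mathbb{E}_{\mu_N}\!\left[\sup_{|t-s|\le\delta}\|\mathcal{J}_t^N-\mathcal{J}_s^N\|_{-k}^2\right]\le 2C_1^2\delta^2\sum_{j=1}^d\sum_{z\in\mathbb{Z}^d}\frac{(|z_1|+|z_2|)^2}{\gamma_z^k} + \frac{2C_2}{N^d}\sum_{j=1}^d\sum_{z\in\mathbb{Z}^d}\frac{|z|^2}{\gamma_z^k}.
\]
Both series converge exactly when $k>k^*=d/2+1=2$, just as in the proof of Lemma \ref{vadoascoli}. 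Letting $N\to\infty$ kills the second term and leaves an $O(\delta^2)$ bound; Markov's inequality followed by $\delta\to 0$ then closes the argument.

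The only delicate point is the commutation of the time-supremum with the infinite Fourier sum, which is legitimate by Tonelli since all summands are nonnegative. Conceptually there is no real obstacle: the crucial feature is that the quadratic variation of $M_{j,z}^N$ decays like $N^{-d}$, so the martingale fluctuations disappear in the limit regardless of $\delta$ and no Aldous-type refinement is needed; the effective time regularity of $\mathcal{J}_t^N$ is produced entirely by the Lipschitz-in-$t$ drift $B_{j,z}^N$, whose Lipschitz constant grows only polynomially in $|z|$ and is tamed by the Sobolev weights $\gamma_z^{-k}$ for $k>2$.
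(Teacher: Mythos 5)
Your argument is correct and follows essentially the same route as the paper's proof: Markov's inequality reduces the claim to componentwise estimates in the Fourier basis, the increment $\mathcal J^N_t(I^{j,z})-\mathcal J^N_s(I^{j,z})$ is split into the martingale part, controlled by Doob's inequality and the $O(N^{-d})$ quadratic variation bound of Remark \ref{re:qmart}, and the drift part, controlled by an $O(\delta(|z_1|+|z_2|))$ bound as in Lemma \ref{le:estimate J(Ii)}, after which the series over $z$ converges precisely for $k>k^*=2$. The only point to make explicit is that the uniform-in-$N$ Lipschitz constant $C_1(|z_1|+|z_2|)$ for the drift should be justified via the intermediate-point form of the Taylor expansion as in Remark \ref{glauglau} (rather than the raw $O(1/N^2)$ error of \eqref{eq:aproxnablas}, whose constant grows with $|z|$), which is exactly how the paper handles the summation over $z$.
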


\begin{proof}
From Markov's inequality the probability in \eqref{eq:modcon} is bounded by	
\begin{equation*}
\frac{1}{\epsilon^2}\,\,\underset{j=1}{\overset{d}{\sum}}\,\underset{z\in \mathbb{Z}^d}{\sum} \,\gamma^{-k}_z \mathbb{E}_{\mu_N}\left[\underset{\substack{|t-s|\leq\delta\\0\leq s,t \leq T}}{\sup} \left[{\mathcal J}_t^N(I^{j,z})-{\mathcal J}_s^N(I^{j,z})\right]^2\right]\,.
\end{equation*}
We give an estimate of
\begin{equation}\label{eq:boundEjz}
\underset{\delta \rightarrow 0}{\lim}\,\,\underset{N\rightarrow \infty}{\limsup}\,\,\mathbb{E}_{\mu_N}\left[\underset{\substack{|t-s|\leq\delta\\0\leq s,t \leq T}}{\sup} \left[{\mathcal J}_t^N(I^{j,z})-{\mathcal J}_s^N(I^{j,z})\right]^2\right]\,.
\end{equation}
To that end  we start recalling the action of $ {\mathcal J}_t^N(G)-{\mathcal J}_s^N(G) $ on a test function $ G\in\mathcal{H}_{k}^d $:
\begin{equation}\label{eq:Mst}
{\mathcal J}_t^N(G)-{\mathcal J}_s^N(G)= \Big[{M}^N_G(t)-{M}^N_G(s)\Big]+\Big[N^{2-d}\int_{s}^{t} dr \underset{\{x,y\}\in\mathcal{E}_N}{\sum} j_{\eta_r}(x,y) G_N(x,y)\Big],
\end{equation}
where $ {M}^N_G(t) $ is the martingale given  in   \eqref{martc}. We bound separately the two terms inside squared parenthesis in  \eqref{eq:Mst} where $ G=I^{j,z} $. We denote by $ {M}^N_{I^{j,z}}(t-s) $ the difference of the martingales $ {M}^N_{I^{j,z}} (t) $ and $ {M}^N_{I^{j,z}} (s) $ and recall that
the uniform modulus of continuity \eqref{eq:moduluscon}
can be written as $ w_\delta({\mathcal J})=\underset{0\leq t\leq T-\delta}{\sup}\,\,\underset{|\Delta t| \leq\delta}{\sup}\|{\mathcal J}_{t+\Delta t}-{\mathcal J}_t\|_{-k} $. By Remark \ref{re:qmart} and Doob's inequality, with analogous estimates to the ones employed in \eqref{furgoncino} we get that
\begin{align*}
\underset{\delta \rightarrow 0}{\lim}\,\,\underset{N\rightarrow \infty}{\limsup}\,\,\mathbb{E}_{\mu_N}\left[\underset{\substack{|t-s|\leq\delta\\0\leq
s,t \leq T}}{\sup}   ({M}^N_{j,z})^2 (t-s) \right]\leq \underset{\delta \rightarrow 0}{\lim}\,\,\underset{N\rightarrow \infty}{\limsup}\,\,\frac{16\pi^2d\delta (1+|\alpha|)|z|^2}{N^d} =0.
\end{align*}
To treat the second term, after Chebychev's and Cauchy-Schwarz's inequalities, proceeding as we did to  bound \eqref{eq:estimateJ(I)} and using similar arguments as those in the proof of Proposition \ref{vadoascoli} we obtain
\begin{equation}
\underset{\delta \rightarrow 0}{\lim}\,\,\underset{N\rightarrow \infty}{\limsup}\,\,\mathbb{E}_{\mu_N}\left[\underset{\substack{|t-s|\leq\delta\\0\leq s,t \leq T}}{\sup} \biggl\lvert \int_{s}^{t} dr\, N^{2-d} \underset{\{x,y\}\in\mathcal{E}_N}{\sum} j_{\eta_r}(x,y) I^{j,z}_N(x,y) \biggr\rvert^2 \right]=0.
\end{equation}
From these estimates one can conclude \eqref{eq:modcon}.
\end{proof}

\begin{proof}\emph{(of Proposition \ref{th:funcPro})} Item $(1)$ is obtained from Lemma \ref{vadoascoli} and  Markov's inequality. Item $(2)$ coincides with the statement of Proposition \ref{mancalabel}.
\end{proof}

This proof  shows that the sequence $ \{\mathbb{P}_N\,\, N\geq 1 \} $ is tight on the space of trajectories $ \mathcal{D}\left([0,T],\mathcal{H}_{-k}^d\right) $.

\subsection{Characterization of limit points }

Now we characterize the unique limit points of the sequence $ \{\mathbb{P}_N\,,\, N\geq 1 \} $.

We begin by fixing some notations. Fix $ x=(x_1,x_2)\in \Lambda_{N},  \mathbb{\ell}\in\mathbb{N},\varepsilon>0,\delta>0 $. To have a simple notation, in some formulas we will write $ \varepsilon N $ even if we should instead consider its integer part $ [\varepsilon N] $.
Let us define the intervals
$$
I^{i,\ell}_p(x):=\left\{
\begin{array}{ll}
[x_i+e^{(i)},x_i+\ell  e^{(i)}] & \textrm{if}\ p=+1\,,  \\

[x_i-\ell  e^{(i)},x_i-e^{(i)}] & \textrm{if}\ p=-1\,,
\end{array}
\right.
$$
and the corresponding boxes
$$ B^\ell_{p,q}(x)= I^{1,\ell}_p(x)\times I^{2,\ell}_q(x)\subseteq \Lambda_N\,, \qquad  p,q\in \{1,-1\}\,. $$

This means that along the 4 possible values of the indexes $ p,q $ we are considering the four boxes of size $ \ell/N $ having $ x $ as a corner. The point $x$ does not belong to the boxes to make them disjoint, and this will be important in the proof below. We define also
\begin{equation}\label{eq:boxdensity}
 \eta^{(p,q)}_\ell(x):= \frac{1}{\ell^2}\underset{y\in B^\ell_{p,q}(x) }{\sum}\,\eta(y)
\end{equation}
the particles density in the box $  B^\ell_{p,q}(x) $.   We consider four approximations of the identity; consider on the continuous torus $ u=(u_1,u_2), v=(v_1,v_2)\in \Lambda$ and define
\begin{equation}\label{eq:characteristic p,q}
i^{(p,q,u)}_{\varepsilon}(v):=\left\{
\begin{array}{ll}
\varepsilon^{-2}1_{[u_1,u_1+\varepsilon)\times [u_2,u_2+\varepsilon)}(v) \text{ if } (p,q)=(1,1),\\
\varepsilon^{-2}1_{[u_1,u_1+\varepsilon)\times (u_2-\varepsilon,u_2]}(v) \text{ if } (p,q)=(1,-1),\\
\varepsilon^{-2}1_{(u_1-\varepsilon,u_1]\times [u_2, u_2+\varepsilon)}(v) \text{ if } (p,q)=(-1,+1),\\
\varepsilon^{-2}1_{(u_1-\varepsilon,u_1)\times (u_2-\varepsilon,u_2]}(v) \text{ if } (p,q)=(-1,-1).\\
\end{array}\right.
\end{equation}
We use also the shortcuts
\begin{equation*}
 \pi_t\left( f\right) :=\int _\Lambda f(u) \rho_t(u)du,\,\, \pi^N_t(f):=\int_\Lambda f(u)\, \pi^N(\eta_t,du),
\end{equation*}
where $ \rho_t(u) $ solves the Cauchy problem \eqref{caud} and $f\in C^2(\Lambda)$. We associate to each vertex $ x\in \Lambda_N $ the non-oriented face $ \mathfrak{f}_x=\{x,x+e^{(1)},x+e^{(1)}+e^{(2)},x+e^{(2)}\} $; accordingly  $ f ^\circlearrowleft_x$ and $ f^\circlearrowright_x $ are the corresponding anticlockwise and clockwise orientations of $\mathfrak{f}_x$.

\begin{proposition}\label{pr:char}
Let $ \mathbb{P} $ be  a limit point of the sequence $ \{ \mathbb{P}_N\,,\, N\geq 1\} $. Then, for $ k>k^* $,
\begin{equation}\label{eq:char}
\mathbb{P}\Big( \mathcal J_\cdot\in \mathcal{C}([0,T],\mathcal{H}_{-k}) \ \textrm{and}\  \mathcal F(G,t,\rho)=0 \,\, \,\,\forall t\in [0,T] \Big)=1\,, \qquad \, \forall \, G\in C^\infty(\Lambda; \mathbb R^2)\,,
\end{equation}
where
\begin{equation}\label{eq:F}
 \mathcal F(G,t,\rho)=\mathcal J_{t} (G)- \int_0^tds\int_{\Lambda}du \,\,\left(\rho_s(u)\nabla \cdot G(u)   +a\left(\rho_s(u)\right)\nabla^{\perp}\cdot G(u)\right),
\end{equation}
with $ \rho_t(u) $ solving the Cauchy problem \eqref{caud} and $a(\cdot)$ is defined in \eqref{antidiff}.
\end{proposition}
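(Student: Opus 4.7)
The plan is to pass to the limit in the martingale identity \eqref{eq:martc2} along any weakly convergent subsequence $\mathbb{P}_{N_k}\Rightarrow\mathbb{P}$ and identify separately the two integrand contributions appearing in the compensator. Fix $G\in C^\infty(\Lambda;\mathbb{R}^2)$ and $t\in[0,T]$. By Remark \ref{re:qmart},
$$\mathbb{E}_{\mu_N}\!\left[(M^N_G(t))^2\right]\le \frac{2dt(1+|\alpha|)|G|_\infty^2}{N^d}\xrightarrow[N\to\infty]{}0,$$
so the martingale disappears in $L^2$ and it suffices to identify the limits of the two terms on the right-hand side of \eqref{eq:martc2}.

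The first, ``gradient'' contribution is soft: using the first line of \eqref{eq:aproxnablas} one rewrites
$$N^{2-d}\int_0^t ds\sum_{x\in\Lambda_N}\eta_s(x)\nabla\!\cdot\! G_N(x)=\int_0^t ds\,\pi^N_s(\nabla\!\cdot\! G)+O(1/N^2)$$
uniformly in $s$, and Theorem \ref{th:hdpi} applied to the continuous test function $\nabla\!\cdot\! G$, combined with bounded convergence (the integrand is uniformly bounded), yields the limit $\int_0^t ds\int_\Lambda \rho_s(u)\,\nabla\!\cdot\! G(u)\,du$ in probability.

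The main obstacle is the second, ``circulation'' contribution, which after using the second line of \eqref{eq:aproxnablas} takes the form
$$\int_0^t ds\,\frac{1}{N^d}\sum_{\mathfrak f\in\mathcal F_N}\tau_{\mathfrak f}g(\eta_s)\,\nabla^\perp\!\cdot\! G(z_{\mathfrak f})+o(1),$$
with $z_{\mathfrak f}$ the centre of $\mathfrak f$. Since $g$ is a local function of four occupation variables rather than of the density, Theorem \ref{th:hdpi} cannot be applied directly and one has to invoke the classical two-step replacement scheme. The \emph{one-block estimate} asserts that, for each $(p,q)\in\{\pm1\}^2$,
$$\limsup_{\ell\to\infty}\limsup_{N\to\infty}\mathbb{E}_{\mu_N}\!\left[\int_0^T ds\,\frac{1}{N^d}\sum_{x\in\Lambda_N}\left|\tau_{\mathfrak f_x}g(\eta_s)-a\!\left(\eta^{(p,q)}_\ell(x)\right)\right|\right]=0,$$
which follows from equivalence of ensembles on the canonical measures concentrated on the boxes $B^\ell_{p,q}(x)$, together with the usual entropy and Dirichlet-form bounds. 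Non-reversibility is not an obstruction here because the symmetrised generator is bounded below by a positive multiple of the simple exclusion Dirichlet form (the extra piece in \eqref{ratej} only modifies each rate by a bounded amount), so the Guo-Papanicolaou-Varadhan argument applies verbatim. The \emph{two-block estimate} then replaces $\eta^{(p,q)}_\ell(x)$ by the macroscopic average $\pi^N_s(i^{(p,q,x)}_\varepsilon)$, and by Theorem \ref{th:hdpi} together with the continuity of $\rho_s$ guaranteed by Remark \ref{re:weaksol}, the four one-sided averages converge, as $\varepsilon\to 0$ after $N\to\infty$, to the common limit $\rho_s(x)$. Combining the two replacements with the continuity of $a$ gives, in $L^1(\mathbb{P}_{\mu_N})$,
$$\int_0^t ds\,\frac{1}{N^d}\sum_{\mathfrak f\in\mathcal F_N}\tau_{\mathfrak f}g(\eta_s)\,\nabla^\perp\!\cdot\! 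G(z_{\mathfrak f})\longrightarrow \int_0^t ds\int_\Lambda a(\rho_s(u))\,\nabla^\perp\!\cdot\! G(u)\,du.$$

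Finally, for path continuity: each elementary jump of $\mathcal J^N_t(I^{j,z})$ has size at most $\sqrt{2}/N^{d+1}$, so for $k>k^*$
$$\sup_{0\le t\le T}\|\mathcal J^N_t-\mathcal J^N_{t^-}\|_{-k}^2\le \frac{2}{N^{2d+2}}\sum_{j,z}\gamma_z^{-k}\xrightarrow[N\to\infty]{}0,$$
so every subsequential limit is supported on $\mathcal C([0,T],\mathcal H_{-k})$. Putting the three contributions together along the convergent subsequence $N_k\to\infty$, and using Lemma \ref{vadoascoli} to ensure uniform integrability when passing to expectations in the weak limit, yields $\mathbb{E}_{\mathbb P}\!\left[|\mathcal F(G,t,\rho)|\right]=0$ for each fixed pair $(G,t)$. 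Path continuity together with the linearity and continuity of $G\mapsto\mathcal F(G,t,\rho)$ promotes this identity simultaneously to all $t\in[0,T]$ and to a countable dense family of test fields, and then by density to all $G\in C^\infty(\Lambda;\mathbb R^2)$, proving \eqref{eq:char}.
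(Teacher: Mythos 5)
Your proposal is correct in outline and shares the paper's skeleton: pass to the limit in the decomposition \eqref{eq:martc2}, kill the martingale via the quadratic variation bound of Remark \ref{re:qmart}, identify the divergence term through Theorem \ref{th:hdpi} and the approximation \eqref{eq:aproxnablas}, and handle the curl term by a replacement argument before transferring the identity to the limit point (your continuity-of-paths argument via the vanishing maximal jump size is a legitimate substitute for the paper's use of condition (2) of Proposition \ref{th:funcPro}, and your upgrade from fixed $t$ to all $t$ by continuity and density replaces the paper's Portmanteau argument with the supremum kept inside). The genuine difference is in the replacement step, which is the heart of the proof. You invoke the classical Guo--Papanicolaou--Varadhan one-block/two-block scheme, justified because $|\alpha|<1$ makes the Dirichlet form \eqref{eq:carrè=dirichlet} dominate a positive multiple of the simple exclusion one, together with equivalence of ensembles to produce $a(\rho)=E_{\nu_\rho}[g]$; this is heavier machinery but more general, since it would work for an arbitrary local function $g$. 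The paper instead proves a bespoke one-step replacement (Lemma \ref{le:repla}, applied four times in Proposition \ref{pr:repla}): each single occupation variable in the product defining $g$ is replaced directly by a one-sided box average at the macroscopic scale $\ell=\varepsilon N$, using only the entropy inequality, Feynman--Kac, the telescopic path estimate and the Dirichlet form bound, with no one-block estimate and no equivalence of ensembles. This shortcut exploits precisely the two special features of the model, namely that $\nu_\rho$ is product Bernoulli and that $g$ factorizes into single-site functions supported at the four corners of a face (so $E_{\nu_\rho}[g]$ factorizes into the product appearing in \eqref{antidiff}), and it is the reason the paper introduces the four disjoint one-sided boxes $B^\ell_{p,q}(x)$ and the kernels $i^{(p,q,u)}_\varepsilon$, which in your scheme only enter at the two-block/identification stage. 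In short: your route buys generality at the cost of importing the full GPV apparatus (whose applicability in this non-reversible setting you correctly justify through the lower bound on the rates and the invariance of $\nu_\rho$), while the paper's route buys a self-contained, shorter proof tailored to the product structure of the invariant measure and of $g$.
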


\begin{proof}
	Condition (2) in Proposition \ref{th:funcPro} tells us that the limit points are concentrated on continuous paths, i.e. paths  in   $ \mathcal{C}\left([0,T],\mathcal{H}_{-k}^d\right)$.
	
	It remains to show that for any $ \delta>0  $ and  any vector field $G\in C^\infty(\Lambda; \mathbb R^2)$
	
\begin{equation}\label{eq:supF}
\mathbb{P}\left( \mathcal J_\cdot\in \mathcal{C}([0,T],\mathcal{H}_{-k}) :\underset{0\leq t\leq T}{\sup} | \mathcal F(G,t,\rho)|>\delta \right)=0.
\end{equation}
By Lebesgue's differentiation theorem $ \underset{\varepsilon\rightarrow 0}{\lim}\,\, \pi_s\left( i^{(p,q,u)}_\varepsilon\right) = \rho_s(u) $ for all  $ u\in\Lambda $, for any $p,q$ and for almost every $s\in [0,T]$. Since $G\in C^\infty(\Lambda; \mathbb R^2)$ by dominated convergence Theorem we have therefore that
	\begin{align}\label{eq:2rho}
& \lim_{\varepsilon \to 0}\underset{0\leq t\leq T}{\sup} \left|  \int_0^t ds \int _\Lambda du \,\, a\left(\rho_s(u)\right)\nabla^{\perp}\cdot G(u) \right.  \\
& - \int_0^t  ds\int_\Lambda du\, \left(\alpha \, \pi_s\left( i^{(-1,-1,u)}_\varepsilon\right) \left(1- \pi_s \left(i^{(1,-1,u)}_\varepsilon\right) \right)  \pi_s \left(i^{(1,1,u)}_\varepsilon\right)
\left(1-\pi_s\left( i^{(-1,1,u)}_\varepsilon\right)\right)
\right)\nabla^{\perp}\cdot G(u) \nonumber\\
& - \int_0^t  ds  \int_\Lambda du\, \left(\alpha \left( 1 -  \pi_s\left( i^{(-1,-1,u)}_\varepsilon\right)\right) \pi_s\left( i^{(1,-1,u)}_\varepsilon\right)\right.\nonumber\\
& \left.\left.  \left(1- \pi_s\left( i^{(1,1,u)}_\varepsilon\right) \right)
\pi_s\left(i^{(-1,1,u)}_\varepsilon\right)
\right)\nabla^{\perp}\cdot G(u)\right|=0\,. \nonumber
\end{align}

By summing and subtracting proper terms and using the above formula, we have that \eqref{eq:supF} is deduced by proving for any $\delta >0$ that

 \begin{align}\label{eq:scatole}
& \underset{\varepsilon\rightarrow 0}{\liminf}\, \mathbb{P}\left(\underset{0\leq t\leq T}{\sup} \left|  \mathcal J_t(G)- \int_0^t\,  ds\,\pi_s\left(\nabla \cdot G\right)\right.  \right.\\
 & - \int_0^t ds\int_\Lambda du\, \left(\alpha\,  \pi_s\left( i^{(-1,-1,u)}_\varepsilon\right)\left(1- \pi_s\left( i^{(1,-1,u)}_\varepsilon \right)\right)  \pi_s \left(i^{(1,1,u)}_\varepsilon\right)
\left(1-\pi_s \left( i^{(-1,1,u)}_\varepsilon \right)\right)
  \right)\nabla^{\perp}\cdot G(u) \nonumber\\
  & - \int_0^t  ds  \int_\Lambda du\, \left(\alpha \left( 1 -  \pi_s\left( i^{(-1,-1,u)}_\varepsilon\right)\right) \pi_s\left( i^{(1,-1,u)}_\varepsilon\right)\right.\nonumber\\
 & \left.\left.\left.  \left(1- \pi_s\left( i^{(1,1,u)}_\varepsilon\right) \right)
   \pi_s\left(i^{(-1,1,u)}_\varepsilon\right)
  \right)\nabla^{\perp}\cdot G(u)\right|>\delta\right)=0\,. \nonumber
\end{align}
By Portmanteau's Theorem we can bound from above the limit \eqref{eq:scatole} by
\begin{align}\label{eq:Port}
& \underset{\varepsilon\rightarrow 0}{\liminf}\underset{N\rightarrow +\infty}{\liminf}\,\mathbb{P}_{\mu_N}\left(\underset{0\leq t\leq T}{\sup} \left|  \mathcal J^N_t(G)- \int_0^t \, ds\,\pi_s\,\left(\nabla \cdot G\right)\right.  \right.\\
& - \int_0^t \,\, ds\int_\Lambda du\, \left(\alpha  \pi_s\left( i^{(-1,-1,u)}_\varepsilon\right)\left(1- \pi_s\left( i^{(1,-1,u)}_\varepsilon\right) \right)  \pi_s \left(i^{(1,1,u)}_\varepsilon\right)
\left(1-\pi_s\left( i^{(-1,1,u)}_\varepsilon\right)
\right)\right)\nabla^{\perp}\cdot G(u) \nonumber\\
& - \int_0^t  ds  \int_\Lambda du\, \left(\alpha \left( 1 -  \pi_s \left(i^{(-1,-1,u)}_\varepsilon\right)\right) \pi_s\left( i^{(1,-1,u)}_\varepsilon\right)\right. \nonumber \\
&\left.\left. \left. \left(1- \pi_s\left( i^{(1,1,u)}_\varepsilon\right) \right)
 \pi_s \left( i^{(-1,1,u)}_\varepsilon\right)
\right)\nabla^{\perp}\cdot G(u)\right|>\delta\right). \nonumber
\end{align}
We sum and subtract $ (1/2)\int_{0}^{t} ds \underset{(x,y)\in E_N}{\sum} j_{\eta_s}(x,y)G_N(x,y) $ to the term inside the supremum in \eqref{eq:Port}. Recalling \eqref{martc} and \eqref{eq:martc2},  we bound the probability in \eqref{eq:Port} by the sum of the next three terms

\begin{equation}\label{eq:Pmart}
\mathbb{P}_{\mu_N} \left(   \underset{0\leq t\leq T}{\sup} \left| M^N_G(t)  \right|> \frac{\delta}{3}\right),
\end{equation}

\begin{equation}\label{eq:Pemp}
\mathbb{P}_{\mu_N} \left(   \underset{0\leq t\leq T}{\sup} \left| N^{2}\int_0^t ds\,\pi_s^N\left(\nabla\cdot G_N\right) -\int_0^t  ds\,\pi_s\left(\nabla \cdot G\right) \right|> \frac{\delta}{3}\right),
\end{equation}

and

\begin{align}\label{eq:Pre}
& \mathbb{P}_{\mu_N}\left(\underset{0\leq t\leq T}{\sup} \left| \int_{0}^{t} ds \sum_{\mathfrak f\in \mathcal F_N}\tau_{\mathfrak f}g(\eta_s)\sum_{(x,y)\in f^\circlearrowleft}G_N(x,y)\right.  \right.\\
& - \int_0^t ds\int_\Lambda du\, \left(\alpha  \pi_s \left(i^{(-1,-1,u)}_\varepsilon\right)\left(1- \pi_s \left(i^{(1,-1,u)}_\varepsilon\right) \right)  \pi_s\left( i^{(1,1,u)}_\varepsilon\right)
\left(1- \pi_s \left(i^{(-1,1,u)}_\varepsilon\right) \right)
\right)\nabla^{\perp}\cdot G(u) \nonumber\\
& - \int_0^t  ds  \int_\Lambda du\, \left(\alpha \left( 1 -  \pi_s\left( i^{(-1,-1,u)}_\varepsilon\right)\right) \pi_s\left( i^{(1,-1,u)}_\varepsilon\right)  \right.\nonumber\\
&\left.\left.\left. \left(1-\pi_s\left( i^{(1,1,u)}_\varepsilon\right) \right)
 \pi_s \left(i^{(-1,1,u)}_\varepsilon\right)
\right)\nabla^{\perp}\cdot G(u)\right|>\frac{\delta}{3}\right). \nonumber
\end{align}

From Doob's inequality and \eqref{eq:qmart}  the probability in \eqref{eq:Pmart} vanishes as $ N\rightarrow \infty $. The same holds  for the probability in \eqref{eq:Pemp} by the approximation \eqref{eq:aproxnablas} for the discrete divergence and the law of large numbers for the empirical density (see Theorem \ref{th:hdpi}). Again by the law of large numbers for the density, to show that \eqref{eq:Pre} is converging to zero for any $\delta$, we can simply show that

\begin{align}\label{eq:PreN}
& \mathbb{P}_{\mu_N}\left(\underset{0\leq t\leq T}{\sup} \left| \int_{0}^{t}ds \sum_{\mathfrak f\in \mathcal F_N}\tau_{\mathfrak f}g(\eta_s)\sum_{(x,y)\in f^\circlearrowleft}G_N(x,y)\right.  \right.\\
& - \int_0^t ds\int_\Lambda du\, \left(\alpha  \pi^N_s \left(i^{(-1,-1,u)}_\varepsilon\right)\left(1- \pi^N_s \left(i^{(1,-1,u)}_\varepsilon\right) \right)  \pi^N_s\left( i^{(1,1,u)}_\varepsilon\right)
\left(1- \pi^N_s \left(i^{(-1,1,u)}_\varepsilon\right) \right)
\right)\nabla^{\perp}\cdot G(u) \nonumber\\
& - \int_0^t  ds  \int_\Lambda du\, \left(\alpha \left( 1 -  \pi^N_s\left( i^{(-1,-1,u)}_\varepsilon\right)\right) \pi^N_s\left( i^{(1,-1,u)}_\varepsilon\right) \right. \nonumber\\
& \left.\left.\left.\left(1- \pi^N_s\left( i^{(1,1,u)}_\varepsilon\right) \right)
\pi^N_s \left(i^{(-1,1,u)}_\varepsilon\right)
\right)\nabla^{\perp}\cdot G(u)\right|>\tilde{\delta}\right), \nonumber
\end{align}
is converging to zero when $N\to+\infty$, for any $ \tilde{\delta} $. Recalling  \eqref{defg}, the probability in  \eqref{eq:PreN} can be bounded by the sum of the following two terms

\begin{align}\label{eq:Preg1}
& \mathbb{P}_{\mu_N}\left(\underset{0\leq t\leq T}{\sup} \left| \int_{0}^{t} ds \sum_{x\in \Lambda_N}   \left\{\eta_s(x)\left(1-\eta_s\left(x+e^{(1)}\right)\right) \right.\right.\right. \\
&\left. \eta_s \left(x+e^{(1)}+e^{(2)}\right)\left(1-\eta_s\left(x+e^{(2)}\right)\right)\right\}\sum_{(w,z)\in f_x^\circlearrowleft}G_N(w,z)\nonumber\\
& - \int_0^t ds\int_\Lambda du\, \left( \pi^N_s \left(i^{(-1,-1,u)}_\varepsilon\right)\left(1- \pi^N_s \left(i^{(1,-1,u)}_\varepsilon\right) \right) \right.\nonumber\\
 &\left.\left.\left. \pi^N_s\left( i^{(1,1,u)}_\varepsilon\right)
\left(1- \pi^N_s \left(i^{(-1,1,u)}_\varepsilon\right) \right)
\right)\nabla^{\perp}\cdot G(u)\right|>\frac{\tilde \delta}{2|\alpha|}\right) \nonumber,
\end{align}
and
\begin{align}{}\label{eq:Preg2}
& \mathbb{P}_{\mu_N}\left(\underset{0\leq t\leq T}{\sup} \left|\int_{0}^{t} ds  \sum_{x\in \Lambda_N} \left\{\left(1-\eta_s(x)\right)\eta_s\left(x+e^{(1)}\right)\right.\right.\right. \\
&\left.\left(1-\eta_s\left(x+e^{(1)}+e^{(2)}\right)\right) \eta_s\left(x+e^{(2)}\right) \right\}\sum_{(w,z)\in f_x^\circlearrowleft}G_N(w,z) \nonumber\\
& - \int_0^t  ds  \int_\Lambda du\, \left( \left( 1 -  \pi^N_s\left( i^{(-1,-1,u)}_\varepsilon\right)\right) \pi^N_s\left( i^{(1,-1,u)}_\varepsilon\right)\right. \nonumber \\
& \left.\left.\left.\left(1-\pi^N_s\left( i^{(1,1,u)}_\varepsilon\right) \right)
\pi^N_s \left(i^{(-1,1,u)}_\varepsilon\right)
\right)\nabla^{\perp}\cdot G(u)\right|>\frac{\tilde{\delta}}{2|\alpha|}\right). \nonumber
\end{align}
By the approximation \eqref{eq:aproxnablas}, we can replace $ \sum_{(w,z)\in f^\circlearrowleft_x}G_N(w,z) $ by $ \nabla^\perp \cdot G(x)/N^2 $ for $N$ large. Moreover by the definitions we have
\begin{align}\label{eq:replaceboxes}
\Big|\pi^N_s\left(i^{(p,q,x)}_\varepsilon\right)-\eta^{(p,q)}_{s,\,\varepsilon N}(x)\Big|\leq \frac{2}{\varepsilon N}\, ,
\qquad x\in \Lambda_N,
\end{align}
we can bound \eqref{eq:Preg1} and \eqref{eq:Preg2}, for $N$ large enough, respectively by
\begin{align}\label{eq:Preplaced1}
& \mathbb{P}_{\mu_N}\left(\underset{0\leq t\leq T}{\sup} \left| \int_{0}^{t} ds\, \frac{1}{N^2}\sum_{x\in \Lambda_N}   \left\{   \nabla^\perp \cdot G\left(x\right)\right.\right.\right. \\
&  \left[ \eta_s(x)\left(1-\eta_s\left(x+e^{(1)}\right)\right) \eta_s \left(x+e^{(1)}+e^{(2)}\right)\left(1-\eta_s\left(x+e^{(2)}\right)\right)\right.\nonumber\\
& -\left.\left.\left.\left.   \, \eta^{(-1,-1)}_{s,\,\varepsilon N}(x)\left(1- \eta^{(1,-1)}_{s,\,\varepsilon N}(x) \right)  \eta^{(1,1)}_{s,\,\varepsilon N}(x)
\left(1- \eta^{(-1,1)}_{s,\,\varepsilon N}(x) \right)
\right]\right\}\right|>\hat{\delta}\right) \nonumber,
\end{align}
and
\begin{align}\label{eq:Preplaced2}
& \mathbb{P}_{\mu_N}\left(\underset{0\leq t\leq T}{\sup} \left| \int_{0}^{t}\,ds  \frac{1}{N^2}\sum_{x\in \Lambda_N}\left\{\nabla^{\perp}\cdot G\left(x\right)\right.\right.\right.\\ &\left[\left(1-\eta_s(x)\right)\eta_s\left(x+e^{(1)}\right)\left(1-\eta_s\left(x+e^{(1)}+e^{(2)}\right)\right) \eta_s\left(x+e^{(2)}\right) \right. \nonumber\\
& - \left.\left.\left.\left.  \left( 1 -  \eta^{(-1,-1)}_{s,\,\varepsilon N}(x)\right) \eta^{(1,-1)}_{s,\,\varepsilon N}(x)  \left(1-\eta^{(1,1)}_{s,\,\varepsilon N}(x) \right)
\eta^{(-1,1)}_{s,\,\varepsilon N}(x)\right]\right\}
\right|>\hat{\delta}\right). \nonumber
\end{align}
for a suitable $ \hat{\delta}<\tilde{\delta}/2|\alpha| $. The key result that allows to conclude the proof is Proposition \ref{pr:repla}, together with Markov's inequality, implying that the probabilities in  \eqref{eq:Preplaced1} and \eqref{eq:Preplaced2}  vanish as $ N\rightarrow\infty $ and $ \varepsilon\rightarrow 0 $. This ends the proof.

\end{proof}

We remark that \eqref{eq:F} is a weak form of $ \mathcal J_t(G)-\int_0^Tdt \int_{\Lambda}du \, J(\rho_t(u))\cdot G(u) $ with $ J(\rho)=-\nabla\rho-A(\rho)\nabla\rho $, but from the regularity property of $ \rho_t(u) $ discussed in Remark \ref{re:weaksol} we have that the two forms are equivalent. Hence the uniqueness  and characterization of the limit point follows from this and the fact that at time $ 0 $ we have $ \mathcal J^N_0(G)=0 $. Therefore the proof of Theorem \ref{limit of current} is completed once we show the auxiliary   replacement lemma used in the proof of Proposition \ref{pr:char}.

\subsection{Replacement lemma}
In this section we discuss how to prove the replacement lemma used to deduce that \eqref{eq:Preplaced1} and \eqref{eq:Preplaced2} converge to zero  when $N\to +\infty$ and $\varepsilon \to 0$.
We start to define the Dirichlet form and  the Carr\'e du Champ operator and we will discuss a relation between them.

\subsubsection{Dirichlet forms}

Recall that the  Bernoulli product  measure $$ \nu_\rho(\eta)=\underset{x\in \Lambda_N}{\prod}\rho^{\eta(x)}(1-\rho)^{1-\eta(x)},$$ is invariant for the dynamics. Let $ f:\Sigma_N\rightarrow\mathbb{R} $ be a density with respect to $ \nu_\rho $. The Dirichlet form of the process is defined as
\begin{equation}
-\left\langle \mathcal{L}_N\sqrt{f},\sqrt{f}\right\rangle_{\nu_\rho},\,\,\text{ with } \langle g,h\rangle_{\nu} :=\int \nu(d\eta)g(\eta)h(\eta)=E_{\nu}(gh),
\end{equation}
for all functions $ g,h:\Sigma_N\rightarrow \mathbb{R} $ and $\nu$ a probability measure in $\Sigma_N$. Moreover, we define the quadratic form, with respect to $ \nu_\rho $, as the operator  $ \mathfrak{D}_N  $ acting on positive functions $ f:\Sigma_N\rightarrow\mathbb{R} $ as follows,
\begin{equation}
\mathfrak{D}_N\left(\sqrt{f},\nu_\rho\right):= \frac{1}{2}\underset{(x,y)\in E_N}{\sum}\int\nu_\rho (d\eta) c_{x,y}(\eta)\left(\sqrt{f}(\eta^{x,y})-\sqrt{f}(\eta)\right)^2.
\end{equation}
A direct computation, using the invariance of $\nu_\rho$ and the fact that $ \nu_\rho(\eta^{x,y})/\nu_\rho(\eta)=1 $,  tells us that the Dirichlet form and the quadratic form coincide, i.e.
\begin{equation}\label{eq:carrè=dirichlet}
	-\left\langle \mathcal{L}_N\sqrt{f},\sqrt{f}\right\rangle_{\nu_\rho}= \mathfrak{D}_N(\sqrt{f},\nu_\rho).
	\end{equation}

\subsubsection{Replacement lemma on the discrete torus}

First we prove a replacement lemma and then show how to apply the basic lemma to our specific case.
Consider $ \psi_N:\Sigma_N\to \mathbb R $ a bounded function whose domain does not overlap the vertex $ 0 $ nor the box $ B^\ell_{p,q}(0) $ for any $\ell\in \mathbb N$.
Let us define
$$ V^{N,p,q}_{\ell,G}(\eta):=\frac{1}{N^2}\underset{x\in \Lambda_N}{\sum}\left(\eta(x)-\eta^{(p,q)}_{\ell}(x)\right)\,\,\tau_x\psi_N(\eta)   \nabla^\perp\cdot G(x)  \,, $$
where $G\in C^\infty(\Lambda; \mathbb R^2)$.
We have the following

\begin{lemma}\label{le:repla}
	Let $ \left\{\psi_N:\Sigma_N\to \mathbb R, N\geq 1\right\} $ be  a uniformly  bounded sequence of  functions whose domains do not overlap the vertex $ 0 $ nor the box $ B_{p ,q }^{\ell}(0) $ for any $\ell\in \mathbb N$. For $ \ell=\varepsilon N $ we have that
	\begin{equation}\label{eq:replacement}
	\underset{\varepsilon\rightarrow 0}{\overline{\lim}}\,\,
	\underset{N\rightarrow\infty}{\overline{\lim}}	\mathbb{E}_{\mu_N}\left[\left|\int^t_0 \,ds\, V^{N,p,q}_{\varepsilon N, G}(\eta_s)    \right|\right]=0\,.
	\end{equation}
\end{lemma}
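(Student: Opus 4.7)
My strategy combines the entropy inequality, Feynman--Kac, a path decomposition of $\eta(x)-\eta^{(p,q)}_\ell(x)$, and a flip identity at each edge; the only model-specific ingredient is a pointwise lower bound on the symmetrised rates that uses $|\alpha|<1$. In the first step, since $H(\mu_N\mid\nu_\rho)=O(N^2)$ for initial data associated to a measurable profile, the entropy inequality, the splitting $e^{|x|}\leq e^x+e^{-x}$, and the Feynman--Kac formula (using the identity \eqref{eq:carrè=dirichlet}, valid by mere invariance of $\nu_\rho$) reduce the left-hand side of \eqref{eq:replacement} to
\[
C\lambda\;+\;t\,\sup_{f:\text{ density}}\!\Big\{\big|\mathbb{E}_{\nu_\rho}[V^{N,p,q}_{\varepsilon N,G}\,f]\big|-\lambda\,\mathfrak{D}_N(\sqrt f,\nu_\rho)\Big\},
\]
for a free parameter $\lambda=N^2/\gamma>0$, where the factor $N^2$ absorbs the speed-up of the dynamics.

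The heart of the proof is the estimate $|\mathbb{E}_{\nu_\rho}[V^{N,p,q}_{\varepsilon N,G}\,f]|\leq C\varepsilon\,(1-|\alpha|)^{-1/2}\sqrt{\mathfrak{D}_N(\sqrt f,\nu_\rho)}$. To prove it, I write
\[
\eta(x)-\eta^{(p,q)}_\ell(x)=\frac{1}{\ell^2}\sum_{y\in B^\ell_{p,q}(x)}\sum_{(w,z)\in\gamma_{x,y}}\!\!\big(\eta(w)-\eta(z)\big),
\]
with $\gamma_{x,y}$ a canonical monotone lattice path of length $\leq 2\ell$ entirely contained in $\{x\}\cup B^\ell_{p,q}(x)$. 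The non-overlap hypothesis on $\psi_N$ ensures $\tau_x\psi_N(\eta^{w,z})=\tau_x\psi_N(\eta)$ for every such edge, so the flip identity
\[
\mathbb{E}_{\nu_\rho}\big[(\eta(w)-\eta(z))H(\eta)f\big]=\tfrac12\mathbb{E}_{\nu_\rho}\big[(\eta(w)-\eta(z))H(\eta)(\sqrt f-\sqrt{f_{w,z}})(\sqrt f+\sqrt{f_{w,z}})\big]
\]
applies with $H=\tau_x\psi_N\,\nabla^{\perp}\!\!\cdot G(x)$, yielding after Cauchy--Schwarz and $\mathbb{E}_{\nu_\rho}[f]=1$ the single-edge bound $|H|_\infty\sqrt{\mathbb{E}_{\nu_\rho}[(\sqrt f-\sqrt{f_{w,z}})^2]}$. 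The pointwise estimate $c_{w,z}(\eta)+c_{z,w}(\eta)\geq 1-|\alpha|$ on $\{\eta(w)\neq\eta(z)\}$, which reads directly off \eqref{ratej} using $|\alpha|<1$, then converts this into $|H|_\infty\sqrt{2\mathfrak{D}^{w,z}_N(\sqrt f)/(1-|\alpha|)}$, where $\mathfrak{D}^{w,z}_N$ is the single-edge contribution to $\mathfrak{D}_N$.

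A double Cauchy--Schwarz closes the estimate: first inside each path, $\sum_{e\in\gamma_{x,y}}\sqrt{\mathfrak{D}^e_N}\leq\sqrt{2\ell}\sqrt{\mathfrak{D}^{\gamma_{x,y}}_N}$, and then across the $N^2\ell^2$ pairs $(x,y)$ via Jensen. Since every lattice edge belongs to at most $C\ell^3$ of the canonical paths, $\sum_{x,y}\mathfrak{D}^{\gamma_{x,y}}_N\leq C\ell^3\,\mathfrak{D}_N$, and the overall prefactor in front of $\sqrt{\mathfrak{D}_N}$ collapses to $\sqrt{\ell}\cdot N\ell\cdot\ell^{3/2}/(N^2\ell^2)=\ell/N=\varepsilon$, yielding the core estimate. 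Plugging it back into the first step, the supremum equals $C^2\varepsilon^2/\big(4\lambda(1-|\alpha|)\big)$; choosing $\lambda$ of order $\varepsilon/\sqrt{1-|\alpha|}$ balances both terms at $O(\varepsilon/\sqrt{1-|\alpha|})$ uniformly in $N$, which vanishes when $\varepsilon\to 0$ and gives \eqref{eq:replacement}.

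The sensitive point is the combinatorial balance: with $O(\ell^2)$ destinations per source and paths of length $O(\ell)$, every lattice edge is shared by $O(\ell^3)$ canonical paths, and the $N^{-2}\ell^{-2}$ normalisation of $V^{N,p,q}_{\varepsilon N,G}$ must combine with this multiplicity to leave only a single factor $\varepsilon=\ell/N$ outside $\sqrt{\mathfrak{D}_N}$ (not a positive power of $N$); the precise pairing of the two Cauchy--Schwarz inequalities is what achieves this critical scaling. Non-reversibility poses no genuine difficulty, as only invariance of $\nu_\rho$ is used in \eqref{eq:carrè=dirichlet} and in the flip identity, and the hypothesis $|\alpha|<1$ enters in a single place, the uniform lower bound on the symmetrised rates.
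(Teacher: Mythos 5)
Your proposal is correct and follows essentially the same route as the paper: entropy inequality plus Feynman--Kac (valid here because $\nu_\rho$ is invariant, so $-\langle\mathcal L_N\sqrt f,\sqrt f\rangle_{\nu_\rho}=\mathfrak D_N(\sqrt f,\nu_\rho)$), telescoping $\eta(x)-\eta^{(p,q)}_{\ell}(x)$ along monotone paths inside $\{x\}\cup B^{\ell}_{p,q}(x)$, a flip/change-of-variables step exploiting the non-overlap hypothesis on $\psi_N$, absorption into the Dirichlet form, and the scaling $\ell=\varepsilon N$. The only (harmless) variations are technical: you absorb the Dirichlet form via Cauchy--Schwarz, the explicit ellipticity bound $c_{w,z}+c_{z,w}\geq 1-|\alpha|$ and optimization of the quadratic in $\sqrt{\mathfrak D_N}$, where the paper uses Young's inequality with rate-proportional weights $A^\eta_{x,y,i}$ to cancel $\tfrac1B\mathfrak D_N$ exactly, and you spell out the $O(\ell^3)$ edge-multiplicity count that the paper leaves as an omitted counting argument.
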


The indexes $ (p,q) $ are fixed and recall that $p,q\in \{-1,1\} $.

\begin{proof}
	By the entropy inequality, see for example Section A1.8 in \cite{KL}, the expectation in \eqref{eq:replacement} can be bounded by
	\[ \frac{H(\mu_N|\nu_\rho)}{N^2 B}+\frac{1}{N^2 B}\log {\mathbb E}_{\nu_\rho}\left( \exp \left|  BN^2 \int^t_0 \,ds\,V^{N,p,q}_{\ell, G}(\eta_s)\right|\right),   \]
	where $ \nu_\rho $ is the Bernoulli measure of parameter $ \rho $  and $ B $ is an arbitrary positive constant. From Feynman-Kac's formula and the variational formula for the largest eigenvalue of a symmetric operator (see respectively Proposition A1.7.1  and Lemma A1.7.2 in \cite{KL}) we can bound last expression from above by
	
	\begin{equation}\label{eq:feynman}
	\frac{H(\mu_N|\nu_\rho)}{N^2 B}+ t\,\, \underset{f}{\sup} \left\{\left\langle V^{N,p,q}_{\ell,G},f\right\rangle_{\nu_\rho}+\frac{1}{B}\left\langle \mathcal{L}_N\sqrt{f},\sqrt{f}\right\rangle_{\nu_\rho}\right\}
	\end{equation}
	where the supremum is carried over all densities $ f $ with respect to $ \nu_\rho $. Note that even if the generator $\mathcal L_N$ is not reversible we have the bound \eqref{eq:feynman}, see the comments
	on Section A.1.7 in \cite{KL}.
	The relative entropy is bounded from above by $ c N^2 $, where $ c $ is a positive constant, see Theorem A.1.8.6 in \cite{KL}. We have then that \eqref{eq:feynman} is bounded from above by
	\begin{equation}\label{eq:feynman2}
	\frac{c}{B}+t\,\,\underset{f}{\sup} \left\{\left\langle V^{N,p,q}_{\ell,G},f\right\rangle_{\nu_\rho}-\frac{1}{B}\mathfrak{D}_N(\sqrt{f},\nu_\rho)\right\}\,.
	\end{equation}
	We consider the following telescopic expansion
	  \begin{equation*}
	\eta(x)-\eta^{(p,q)}_{\ell}(x)= \frac{1}{\ell^2}\underset{\left\{y\in B^\ell_{p,q}(x)\right\}}{\sum}\,\underset{\left\{z^{(i)}\in \gamma_{x,y}\right\}}{\sum}\left(\eta\left(z^{(i)}\right)-\eta\left(z^{(i+1)}\right)\right),
	\end{equation*}
	 where $ \gamma_{x,y} $ is the minimal length path from $ x $ to $ y $, with the final vertex $y$ removed, obtained going from $x$ to
	 $y$ walking first in the direction $pe^{(1)}$ until we cross the perpendicular line containing $y$ and then
	 walking in the direction $qe^{(2)}$ until we reach $y$. The final vertex $y\not\in \gamma_{x,y}$ in such a way that $\underset{\left\{z^{(i)}\in \gamma_{x,y}\right\}}{\sum}\left(\eta\left(z^{(i)}\right)-\eta\left(z^{(i+1)}\right)\right)=\eta(x)-\eta(y)$.
	 Using the above telescopic formula, the change of variables $ \eta'=\eta^{z^{(i)},z^{(i+1)}} $ and the hypothesis on the domain of $ \psi_N $,  we get that $ \left\langle V^{N,p,q}_{\ell,G},f\right\rangle_{\nu_\rho} $ is equal to
	\begin{multline*}
	\frac{1}{2\ell^2}\int \nu_\rho(d\eta)\frac{1}{N^2}\underset{x,y,i}{\sum}\left(\eta\left(z^{(i)}\right)-\eta\left(z^{(i+1)}\right)\right)\tau_x\psi_N(\eta) \nabla^\perp\cdot G(x)\cdot\nonumber\\
	\Big(\sqrt{f}(\eta)-\sqrt{f}\left(\eta^{z^{(i)},z^{(i+1)}}\right)\Big)\Big(\sqrt{f}(\eta)+\sqrt{f}\left(\eta^{z^{(i)},z^{(i+1)}}\right)\Big)\,,
	\end{multline*}
	where we write shortly $\underset{x,y,i}{\sum}$ to denote $\underset{x\in \Lambda_N}{\sum}\underset{\left\{y\in B^\ell_{p,q}(x)\right\}}{\sum}\,\underset{\left\{z^{(i)}\in \gamma_{x,y}\right\}}{\sum}$. We  assume  that $\nabla^\perp\cdot G(x)\neq 0,$ otherwise the integral above is null and there is nothing to prove. We also
assume that $ c_{z^{(i)},z^{(i+1)}}(\eta)>0$, because if that is not the case then the factors $\left(\eta\left(z^{(i)}\right)-\eta\left(z^{(i+1)}\right)\right)$ are equal to zero and again there is nothing to prove. 	Applying Young's inequality, last expression can  be bounded from above by
	\begin{multline}\label{Young}
	\frac{1}{4\ell^2}\underset{x,y,i}{\sum}\left[\frac{1}{N^2} \nabla^\perp\cdot G(x)\right]\cdot\left\{\int{\nu_\rho(d\eta)}A^\eta_{x,y,i}   \left(\sqrt{f}(\eta)-\sqrt{f}\left(\eta^{z^{(i)},z^{(i+1)}}\right)\right)^2+\right.\\
	\left.\int{\nu_\rho(d\eta)} \frac{1}{A^\eta_{x,y,i}}\left(\eta(z^{(i)})-\eta(z^{(i+1)})\right)^2 \left(\tau_x\psi_N(\eta)\right)^2\left(\sqrt{f}(\eta)+\sqrt{f}\left(\eta^{z^{(i)},z^{(i+1)}}\right)\right)^2\right\},
	\end{multline}
	where we choose $ A_{x,y,i}^\eta:=\frac{4\left(c_{z^{(i)},z^{(i+1)}}(\eta)\right)}{B c' \ell\left[\frac{1}{N^2} \nabla^\perp\cdot G(x)\right]}$, with $ c' $ a suitable positive constant.
	
	 The first term in \eqref{Young} is equal to
	\begin{equation}\label{eq:this should Dirichlet}
	   \frac{1}{Bc'\ell^3}\underset{x,y,i}{\sum}\int{\nu_\rho(d\eta)}c_{z^{(i)},z^{(i+1)}}(\eta) \left(\sqrt{f}(\eta)-\sqrt{f}\left(\eta^{z^{(i)},z^{(i+1)}}\right)\right)^2,
	\end{equation}
	that can be bounded (here it is relevant the constant $c'$ that is used in a simple counting argument that we omit) by  $ \frac{1}{B}\underset{(x,y)\in E_N}{\sum}\int{\nu_\rho(d\eta)}c_{x,y}(\eta)\left (\sqrt{f}(\eta)-\sqrt{f}(\eta^{x,y})\right)^2$, which cancels in \eqref{eq:feynman2} with $-\frac{1}{B}\mathfrak{D}_N(\sqrt{f},\nu_\rho)$.

	Since $ \frac{1}{N^2} \nabla^\perp\cdot G(x)
	= O\left(\frac{1}{N^2}\right) $, the second term in \eqref{Young} is bounded from above by $ \frac{CB\ell^2}{N^2} $, where $ C $ is  a positive constant. Considering $ \ell=\varepsilon N $ we have that \eqref{eq:feynman2} is smaller or equal than
	\begin{equation}\label{jbondf}
	\frac{c}{B}+CB\varepsilon^2, \,
	\end{equation}
	therefore taking the limits in \eqref{jbondf} first in $N\to \infty$, then in $\varepsilon\to 0$ and finally in $B\to+\infty$, we obtain the result.
	
\end{proof}

Using this basic lemma we can finally prove the following result

\begin{proposition}\label{pr:repla}{[Replacement lemma]}
	
	Let $ G:\Lambda\rightarrow \mathbb{R}^2 $ be  a $ C^\infty(\Lambda) $ vector field. For any $ t\in[0,T] $, we have that
	\begin{align}\label{eq:repg1}
	&	\underset{\varepsilon\rightarrow 0}{\overline{\lim}}\,\,
	\underset{N\rightarrow\infty}{\overline{\lim}}\mathbb{E}_{\mu_N}\left( \left| \int_{0}^{t} ds\, \frac{1}{N^2}\sum_{x\in \Lambda_N}   \left\{   \nabla^\perp \cdot G\left(x\right)\left[\tau_x g_1(\eta_s)\right.\right.\right.\right.\\
	& \left.\left.\left.\left. -   \, \eta^{(-1,-1)}_{s,\,\varepsilon N}(x)\left(1- \eta^{(1,-1)}_{s,\,\varepsilon N}(x) \right)  \eta^{(1,1)}_{s,\,\varepsilon N}(x)
	\left(1- \eta^{(-1,1)}_{s,\,\varepsilon N}(x) \right)
	\right]\right\}\right|\right)=0 \nonumber,
	\end{align}
	and
	\begin{align}\label{eq:repg2}
	&	\underset{\varepsilon\rightarrow 0}{\overline{\lim}}\,\,
	\underset{N\rightarrow\infty}{\overline{\lim}}\mathbb{E}_{\mu_N}\left( \left| \int_{0}^{t} ds\, \frac{1}{N^2}\sum_{x\in \Lambda_N}   \left\{   \nabla^\perp \cdot G\left(x\right)\left[\tau_x g_2(\eta_s)\right.\right.\right.\right.\\
	& \left.\left.\left.\left. -   \, \left( 1 -  \eta^{(-1,-1)}_{s,\,\varepsilon N}(x)\right) \eta^{(1,-1)}_{s,\,\varepsilon N}(x)  \left(1-\eta^{(1,1)}_{s,\,\varepsilon N}(x) \right)
	\eta^{(-1,1)}_{s,\,\varepsilon N}(x)
	\right]\right\}\right|\right)=0 \nonumber,
	\end{align}
	where $$ g_1(\eta)=\eta(x)\left(1-\eta\left(x+e^{(1)}\right)\right) \eta \left(x+e^{(1)}+e^{(2)}\right)\left(1-\eta\left(x+e^{(2)}\right)\right) $$ and $$ g_2(\eta)= \left(1-\eta(x)\right)\eta\left(x+e^{(1)}\right)\left(1-\eta\left(x+e^{(1)}+e^{(2)}\right)\right) \eta\left(x+e^{(2)}\right).$$
\end{proposition}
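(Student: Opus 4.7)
The plan is to prove \eqref{eq:repg1} in detail; \eqref{eq:repg2} follows by an identical argument with empty and occupied sites (and box directions) interchanged. The workhorse will be Lemma \ref{le:repla}, applied four times via a telescoping expansion in which we replace one corner of the face $\mathfrak f_x$ at a time by its box average.

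\textbf{Step 1 (telescoping).} For each $x \in \Lambda_N$, write the integrand in \eqref{eq:repg1} as a telescoping sum of four differences, replacing successively $\eta(x)$, $\eta(x+e^{(1)})$, $\eta(x+e^{(1)}+e^{(2)})$, and $\eta(x+e^{(2)})$ by $\eta^{(-1,-1)}_{\varepsilon N}(x)$, $\eta^{(1,-1)}_{\varepsilon N}(x)$, $\eta^{(1,1)}_{\varepsilon N}(x)$, and $\eta^{(-1,1)}_{\varepsilon N}(x)$. The $i$-th term has the form $\pm (\eta(x+w_i) - \eta^{(p_i,q_i)}_{\varepsilon N}(x))\phi^{(i)}_N(\eta)$, with $w_i \in \{0,e^{(1)},e^{(1)}+e^{(2)},e^{(2)}\}$ and $\phi^{(i)}_N$ the uniformly bounded product of the already-replaced target factors and the still-original factors (with the $i$-th factor removed).

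\textbf{Step 2 (shift and boundary corrections).} For each $i$, I change variable $y = x + w_i$ and use translational covariance to rewrite the contribution of the $i$-th term as
\begin{equation*}
\frac{1}{N^2}\sum_{y \in \Lambda_N}\bigl(\eta(y)-\eta^{(p_i,q_i)}_{\varepsilon N}(y-w_i)\bigr)\,\tau_{-w_i}\phi^{(i)}_N(\eta)\,\nabla^\perp\!\cdot G(y-w_i).
\end{equation*}
I then replace $\eta^{(p_i,q_i)}_{\varepsilon N}(y-w_i)$ by $\eta^{(p_i,q_i)}_{\varepsilon N}(y)$ and $\nabla^\perp\!\cdot G(y-w_i)$ by $\nabla^\perp\!\cdot G(y)$; the symmetric difference of the two boxes has only $O(\varepsilon N)$ sites out of $(\varepsilon N)^2$, so the first substitution produces a total error of order $1/(\varepsilon N)$, and the second is $O(1/N)$ by smoothness of $G$. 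Both errors vanish in the limit $N\to\infty$.

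\textbf{Step 3 (apply Lemma \ref{le:repla}).} After the corrections of Step 2, each of the four terms takes exactly the form required by Lemma \ref{le:repla} with $\psi_N = \tau_{-w_i}\phi^{(i)}_N$, provided the domain of this $\psi_N$ does not contain the origin and is disjoint from $B^{\varepsilon N}_{p_i,q_i}(0)$. A direct case check confirms this at each step: at step $2$, for instance, the translated $\phi^{(2)}_N$ depends on the sites $\{-e^{(1)},e^{(2)},-e^{(1)}+e^{(2)}\}$ and on $B^{\varepsilon N}_{-1,-1}(-e^{(1)})$, all lying in the half-plane of non-positive first coordinate, while $B^{\varepsilon N}_{1,-1}(0)$ has strictly positive first coordinate; analogous checks work for steps $3$ and $4$, where the new box always lies in the quadrant complementary to the ones containing the remaining uncollapsed sites and the previously-introduced averages. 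Lemma \ref{le:repla} then gives $\limsup_{\varepsilon \to 0}\limsup_{N\to\infty}$ of the expectation equal to $0$ for each term. Summing the four contributions yields \eqref{eq:repg1}.

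\textbf{Main obstacle.} The non-trivial point is Step 3: the $\phi^{(i)}_N$ are no longer purely local once earlier replacements have been absorbed, so one must check that each newly introduced box $B^{\varepsilon N}_{p_i,q_i}(0)$ is disjoint from all the previously introduced boxes as well as from the local-site portion of $\phi^{(i)}_N$. This is precisely what forces the specific target choice of directions $(-1,-1),(1,-1),(1,1),(-1,1)$ for the four corners — each box occupies the quadrant diagonally opposite the remaining portion of the face — and is what allows the telescoping to close.
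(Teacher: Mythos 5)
Your proposal is correct and follows essentially the same route as the paper: a four-step telescoping in which each corner of the face is successively replaced by the corresponding box average, each step handled by Lemma \ref{le:repla} with the intermediate functions $\psi_N$ being exactly the partially-replaced products you denote $\phi^{(i)}_N$. The re-anchoring/shift corrections of order $1/(\varepsilon N)$ and the quadrant-disjointness checks you spell out are precisely the details the paper leaves implicit ("we omit details"), so your write-up is a faithful, slightly more explicit version of the intended argument.
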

In the present context, the replacement formulas \eqref{eq:repg1} and \eqref{eq:repg2} are done by using as auxiliary measure the Bernoulli product measure $ \nu_\rho $ of constant profile,  therefore we  can  replace the occupation variables with the average density on boxes of side $ \ell=\varepsilon N $ (see for example \cite{KL}). To prove Proposition \ref{pr:repla} we have to apply Lemma \ref{le:repla} several times. For example in $ g_1(\eta) $, the full proof would ask  the following steps:

\vspace{0.5cm}

\begin{itemize}
	\item[1)] Replace $ \eta(x)\tau_x\psi_N(\eta) $  with
	$ \eta^{(-1,-1)}_{\varepsilon N}(x)\tau_x\psi_N(\eta) $;
	\item[2)] Replace $\left(1- \eta\left(x+e^{(1)}\right)\right) \tau_x\psi_N(\eta) $  with
	$ \left(1- \eta^{(1,-1)}_{\varepsilon N}(x) \right)\tau_x\psi_N(\eta) $;
	\item[3)] Replace  $ \eta\left(x+e^{(1)}+e^{(2)}\right)\tau_x\psi_N(\eta) $ with
	$ \eta^{(1,1)}_{\varepsilon N}(x)  \tau_x\psi_N(\eta) $;
	\item[4)] Replace   $ \left(1- \eta\left(x+e^{(2)}\right)\right)\tau_x\psi_N(\eta) $ with
	$ \left(1-\eta^{(-1,1)}_{\varepsilon N}(x)\right)\tau_x\psi_N(\eta) $;

\end{itemize}
\medskip
where in 1), 2), 3) and 4) the function $ \psi_N(\eta) $ is given  respectively  by
\medskip
\begin{itemize}
	\item [1)] $ \left(1-\eta\left(x+e^{(1)}\right)\right) \eta \left(x+e^{(1)}+e^{(2)}\right)\left(1-\eta\left(x+e^{(2)}\right)\right) $;
	\item[2)]  $\eta^{(-1,-1)}_{\varepsilon N}(x) \eta\left(x+e^{(1)}+e^{(2)}\right)\left(1- \eta\left(x+e^{(2)}\right)\right)  $;
	\item[3)] $ \eta^{(-1,-1)}_{\varepsilon N}(x)\left(1- \eta^{(1,-1)}_{\varepsilon N}(x) \right)\left(1- \eta\left(x+e^{(2)}\right)\right) $;
	\item[4)] $ \eta^{(-1,-1)}_{\varepsilon N}(x)\left(1- \eta^{(1,-1)}_{\varepsilon N}(x) \right)\eta^{(1,1)}_{\varepsilon N}(x) $.
\end{itemize}
Analogous steps should be done also for $ g_2(\eta) $. We omit details.

\section{Generalized gradient models, weakly asymmetric models and Einstein relation}\label{norigor}

We give a short outline of the form of the scaling limits in several conditions. We give no proofs and our aim here is just to give a general overview.

\subsection{Scaling limits of generalized gradient models}
The first case we consider is a diffusive generalized gradient model, i.e. the instantaneous current is like in \eqref{alvar}, and having stationary grandcanonical measures $ \nu_\rho $ parameterized by the density $ \rho $. According to the general scheme of Section \ref{SL}, we have that $\mathcal J^N_t(G)$
is equal to
\begin{equation}\label{sera}
\frac{N^2}{2N^d}\int_0^tds\,\sum_{(x,y)\in E_N}G_N(x,y)j_{\eta_s}(x,y) \,,
\end{equation}
up to martingales terms negligible in the scaling limit.
Here $G$ is a $ C^\infty(\Lambda) $-vector field and $G_N$ its discretization given in \eqref{discvec}. The factor $N^2$ is due as usual to the diffusive rescaling of time. After some discrete integration by parts, we have that \eqref{sera} becomes
\begin{equation}\label{berrettini}
\frac{N^2}{N^d}\int_0^tds\,\sum_{x\in \Lambda_N}\sum_{i=1}^d\sum_{j=1}^d \tau_x h_{i,j}(\eta)\left(G_N(x-e^{(j)},x-e^{(j)}+e^{(i)})-G_N(x, x+e^{(i)})\right).
\end{equation}
We have that up to uniformly infinitesimal terms
$$
N^2 \left(G_N(x-e^{(j)},x-e^{(j)}+e^{(i)})-G_N(x, x+e^{(i)})\right)
$$
coincides with $-\partial_{x_j}G_i$.
We define $H^{i,j}(\rho)=- E_{\nu_\rho}\left(h_{i,j}\right)$ where we recall that $\nu_\rho$ is the grandcanonical invariant measure parameterized by the density $\rho$. By a replacement lemma we deduce that \eqref{berrettini} converges to
\begin{equation}
\int_0^t\,ds \int_\Lambda dx \sum_{i,j}H^{i,j}(\rho(t,x))\partial_{x_j}G_i(x)\,,
\end{equation}
where $\rho(x,t)$ is the solution of the hydrodynamic equation. This means that the typical current is
\begin{equation*}
J_i(\rho)=-\sum_{j=1}^d \partial_{x_j} \left(H^{i,j}(\rho)\right)\\
=-\sum_{j=1}^d\left(H_{i,j}\right)'(\rho)\partial_{x_j}\rho\,.
\end{equation*}
Recalling \eqref{tygr}, we have the non necessarily symmetric diffusion matrix
\begin{equation}\label{eq:nonsym}
\mathcal D_{i,j}(\rho)=\left(H_{i,j}\right)'(\rho)\,.
\end{equation}
The hydrodynamic equation is again the conservation law $\partial_t\rho+\nabla\cdot J(\rho)=0$.

\subsection{Weakly asymmetric models and Einstein relation}

We consider here the basic model \eqref{ratej} in presence of a weak external field. More precisely let $H=(H_1(x),H_2(x))$ be a $C^1$ vector field on $\Lambda$ and let $H_N$ be its discretized version given from \eqref{discvec}. For simplicity we consider a time independent vector field but all could be repeated in the case of a time dependent one.
We consider transition rates perturbed by the presence of the external field and defined by
\begin{equation}\label{ratesG}
c_{x,y}^H(\eta):=c_{x,y}(\eta)e^{H_N(x,y)}\,.
\end{equation}

Let us introduce the density of free energy
\begin{equation}\label{effe}
f(x)=x\log (x)+(1-x)\log(1-x)\,, \qquad x\in (0,1)\,,
\end{equation}
that coincides, up to a linear term, with the large deviations rate functional  for the stationary measure $ \nu_\rho $, that in this case is a product Bernoulli measure.

For the model with rates perturbed like in \eqref{ratesG} we have that the hydrodynamic equation is
\begin{equation}\label{caud2}
\partial_t\rho_t=\nabla \cdot\left(\nabla\rho_t-2\sigma(\rho)H\right)\,,
\end{equation}
and the corresponding typical current is
\begin{equation}\label{eq:currE}
J^H(\rho)=-\nabla\rho-A(\rho)\nabla\rho+2\sigma(\rho)H\,,
\end{equation}
where the mobility matrix is given by
\begin{equation}\label{sigma}
\sigma(\rho)=\mathbb I \rho(1-\rho)\,.
\end{equation}
We have therefore the validity of the Einstein relation
\begin{equation}\label{ein}
D(\rho)=\sigma(\rho)f''(\rho)\,,
\end{equation}
that involves only the symmetric part $D$ of the diffusion matrix.
	
An outline of the argument that gives \eqref{eq:currE}, \eqref{sigma} and \eqref{ein} is the following. We consider just the scaling of the current. By definition \eqref{discvec} we have that $H_N=O(1/N)$ and by a Taylor expansion we have that the instantaneous current $j_\eta^H$ for the perturbed model can be written up to uniformly infinitesimal terms as
	\begin{equation}\label{istcurG}
	j^H_\eta(x,y)=j_\eta(x,y)+\left[c_{x,y}(\eta)+c_{y,x}(\eta)\right]H_N(x,y)\,.
	\end{equation}
	The second term on the right-hand side of \eqref{istcurG} is
	\begin{equation}\label{sviluppo}
	\Big[(\eta(x)-\eta(y))^2+(\eta(x)-\eta(y))\Big(\tau_{f^+(x,y)}g(\eta)-\tau_{f^-(x,y)}g(\eta)\Big)\Big]H_N(x,y)\,.
	\end{equation}
	We obtain therefore \eqref{eq:currE} by a suitable replacement lemma and based on the following elementary computations. Recalling that $\nu_\rho$ in this case is a product Bernoulli measure, we have
	\begin{equation}\label{eunou}
	 E_{\nu_\rho}\left[(\eta(x)-\eta(y))^2\right]=2\rho(1-\rho)\,,
	\end{equation}
	while instead
	\begin{equation}\label{edueu}
	 E_{\nu_\rho}\left[(\eta(x)-\eta(y))\Big(\tau_{f^+(x,y)}g(\eta)-\tau_{f^-(x,y)}g(\eta)\Big)\right]=0\,.
	\end{equation}

	\section*{Acknowledgements}
 P.G. thanks  FCT/Portugal for support through the project  UID/MAT/04459/2013. This project has received funding from the European Research Council (ERC) under  the European Union's Horizon 2020 research and innovative programme (grant agreement   No 715734).  D.G. thanks L. Bertini and C. Landim for several discussions on the topological setting of section \ref{sec:currentop}, introduced in \cite{BLG}.

\end{document}